\documentclass[12pt]{amsart} 
\usepackage{amssymb}
\usepackage[enableskew,vcentermath]{youngtab}

\newcommand{\g}{\mathfrak g} 
\newcommand{\ag}{\mathfrak a} 
\newcommand{\gl}{\mathfrak{gl}} 

\newcommand{\R}{\mathbb R}
\newcommand{\rtt}{\rightthreetimes} 
\newcommand{\csp}{\mathfrak{csp}}
\newcommand{\m}{\mathfrak{m}} 
\newcommand{\n}{\mathfrak{n}} 
\newcommand{\Gr}{\operatorname{Gr}}
\newcommand{\Stab}{\operatorname{Stab}}
\newcommand{\Der}{\operatorname{Der}}
\newcommand{\Hom}{\operatorname{Hom}}
\newcommand{\ad}{\operatorname{ad}}
\newcommand{\E}{\mathcal{E}}
\newcommand{\p}{\partial}
\newcommand{\dd}[1]{\frac{\p}{\p #1}}
\newcommand{\sll}{\mathfrak{sl}}
\newcommand{\Fol}{\operatorname{Fol}}

\newtheorem{thm}{Theorem}
\newtheorem{prop}{Proposition}
\newtheorem{lem}{Lemma}

\theoremstyle{definition}
\newtheorem{dfn}{Definition}
\newtheorem{ex}{Example}

\allowdisplaybreaks

\title{Symmetries of trivial systems of ODEs of mixed order} 
\author{Boris Doubrov \and Igor Zelenko}
\address{Belarussian State University, Nezavisimosti Ave.~4, Minsk 220050, Belarus;
 E-mail: doubrov@islc.org}
\address{Department of Mathematics, Texas A$\&$M University,
   College Station, TX 77843-3368, USA; E-mail: zelenko@math.tamu.edu}

\dedicatory{Dedicated to Mike Eastwood on the occasion of his 60th birthday}
\subjclass[2010]{17B70, 34A26, 34A34, 34C14, 53C10}
\keywords{Ordinary differential equations, symmetry algebras, graded Lie algebras, Tanaka prolongation, Sternberg prolongation, flag structures}

\begin{document} 
\begin{abstract} We compute symmetry algebras of a system of two equations
$y^{(k)}(x)=z^{(l)}(x)=0$, where $2\le k < l$. It appears that there are many ways
to convert such system of ODEs to an exterior differential system. They lead to
different series of finite-dimensional symmetry algebras. For example, for $(k,l)=(2,3)$ 
we get two non-isomorphic symmetry algebras of the same dimension. We explore how these symmetry
algebras are related to both Sternberg prolongation of $G$-structures and Tanaka
prolongation of graded nilpotent Lie algebras.

Surprisingly, the case $(k,l)=(2,3)$ provides an example of a linear subalgebra $\ag$ in
$\gl(5,\R)$ such that the Sternberg prolongations of $\ag$ and $\ag^{t}$ are both of the
same dimension, but are non-isomorphic.

We also discuss the non-linear case and the link with flag structures on smooth manifolds.
\end{abstract}

\maketitle

\section{Introduction} The goal of this paper is to show that symmetry computations for
systems of ODEs of mixed order exhibit new phenomena not visible in case of systems of
ODEs of uniform order. It is sufficient to consider a system of two trivial ODEs of
different order to demonstrate these phenomena:
\begin{equation}\label{odekl}
y^{(k)}(x)=0, \quad z^{(l)}(x)=0,
\end{equation}
where $y(x)$ and $z(x)$ are two unknown functions. We shall always assume that $2\le k < l$,
as under these conditions the symmetry algebra of this system becomes finite-dimensional.

The first phenomenon is that mixed order systems admit different reformulations in
terms of exterior differential systems, which lead to different symmetry algebras. And
unlike inclusions between Lie algebras of point, contact and internal symmetries, there is
no inclusion between symmetry algebras coming from different EDS. For example,
we show that in the simplest non-trivial case of $(k,l)=(2,3)$ there are two different EDS's. 
Their symmetry algebras are both 15-dimensional, but one of them is isomorphic to 
$\gl(3,\R)\rtt S^2(\R^3)$, while another is isomorphic to $\csp(4,\R)\rtt \R^4$. 
For other pairs of $2\le k < l$ these symmetry algebras even have different dimensions.

The symmetries we consider in this paper are so-called \emph{external symmetries} 
(see~\cite{ako} for the terminology and the relationship between different types of symmetries). 
They preserve not only the internal geometry of the equation (which is just a 1-dimensional foliation and, 
thus, locally trivial), but also certain projections to jet spaces of lower order. And different
EDS interpretations of the same system of mixed order come from different projections.

The second phenomenon is that the same case $(k,l)=(2,3)$, interpreted in terms of
$G$-structures, leads to an explicit example of a linear subalgebra $\ag$ in $\gl(5,\R)$
such that the Sternberg prolongations of $\ag$ and $\ag^{t}$ are both of the same dimension,
but are non-isomorphic. In fact, we show that both symmetry algebras can be
obtained as total prolongations of certain $G$-structures related to the orbits of
$SL(2,\R)$-action on Grassmann varieties $\Gr_2(\R^{k+l})$ and $\Gr_{k+l-2}(\R^{k+l})$.

Finally, the third phenomenon is related to the use of Tanaka theory of graded nilpotent
Lie algebras and their prolongations for computing the symmetry algebras. It appears that
in one of the EDS reformulations one needs to consider graded nilpotent Lie algebras
$\m=\sum_{i<0}\m_i$, which are not generated by $\m_{-1}$. It appears that Tanaka theory
produces the expected result in this case as well, if we slightly modify the notion of
Tanaka prolongation.

The paper is organized as follows. In Section~\ref{sec:eds} we show how systems of mixed
order can be turned into exterior differential systems in two different ways. In
Section~\ref{sec:sym} we compute symmetry algebras for each of these exterior differential
systems and show that we get non-isomorphic symmetry algebras. In
Sections~\ref{sec:tan} and~\ref{sec:g} we show how these symmetry algebras appear
naturally as Tanaka prolongation of certain graded nilpotent Lie algebras and as Sternberg 
prolongation of subalgebras in $\gl(k+l,\R)$. In Section~\ref{sec:nlin} we discuss the case of non-linear systems of mixed order.  In Section~\ref{sec:flag} we link the geometry of non-linear systems of mixed order with so-called flag structures on smooth manifolds. Finally, in Section~\ref{sec:shifts} we describe other ways to associate exterior differential systems with equations of mixed order. 

\section{Two EDS interpretations} \label{sec:eds}

\subsection{The EDS of first kind}
Let $J^l(\R,\R^2)$ be the $l$-th jet space of maps from $\R$ to $\R^2$ with the coordinate system:
\[
(x,y,y_1,\dots,y_l,z,z_1,\dots,z_l).
\]
The system~\eqref{odekl} can be prolonged to the system of equations:
\[
y^{(k)}=y^{(k+1)}=\dots=y^{(l)}=0,\quad z^{(l)}=0,
\]
which can be considered as a submanifold $\E\subset J^l(\R,\R^2)$ defined by equations:
\[
y_{k}=y_{k+1}=\dots=y_{l}=0,\quad z_{l}=0.
\]
The remaining set of jet space coordinates $(x,y,z,y_1,\dots,y_{k-1},z_1,\dots,z_{l-1})$ forms 
a coordinate system on $\E$ itself. The system $\E$ defines also a one-dimensional vector bundle
on $\E$ tangent to lifts of all solutions of~\eqref{odekl}:
\begin{equation}\label{distE}
E = \left\langle \dd{x} + y_1\dd{y} + \dots + y_{k-1}\dd{y_{k-2}}+ z_1\dd{z} + 
\dots + z_{l-1}\dd{z_{l-2}}\right\rangle.
\end{equation}
Recall that the \emph{contact system} $C$ on $J^l(\R,\R^2)$ is defined by contact differential forms 
$dy_i-y_{i+1}dx$, $dz_i-z_{i+1}dx$, $i=0,\dots,l-1$, where $y_0=y$ and $z_0=z$. Note that the 
distribution $E$ can be defined as the intersection of  the contact system $C$ with $T\E$, or, in other
words, by the restriction of all contact forms on $\E$.

Due to the Lie-Backlund theorem this contact system canonically defines a sequence of integrable distributions transversal to $E$: 
\[
V_i = \langle dx, dy_r, dz_r \mid r = 0,\dots, i-1 \rangle^{\perp},
\]
where $i=1, \dots, l$. These distributions can also be defined as tangent spaces to fibers 
of the canonical projections $\pi_{l,i-1}\colon J^l(\R,\R^2)\to J^{i-1}(\R,\R^2)$. In particular, all
symmetries of the contact system $C$ are exactly the prolongations of vector fields from 
$J^0(\R,\R^2)=\R^3$. Such vector fields are called $\emph{point vector fields}$.

Symmetries (or, rather, \emph{infinitesimal symmetries}) of $\E$ are defined as point vector fields, 
which are at the same time tangent to the equation submanifold $\E$. Let $X$ be a restriction of 
such symmetry to $\E$ itself. Then it preserves both the vector distribution $E$ given by~\eqref{distE}
and all the intersections $V_i\cap T\E$. Denote these intersections by $F_i$. In particular, we have:
\[
F_1 = \left \langle \dd{y_i}, i=1,\dots,k-1; \dd{z_j},j=1,\dots,l-1\right\rangle.
\]
Simple computation shows that $E$ and $F_1$ alone allow to recover all other $F_i$, $i>1$, via:
\[
F_{i+1} = \{ Y \in F_i \mid [Y, E]\subset F_i \}.
\]
Note that if $l>k$, then the smallest of $F_i$ is
\[
F_{l-1} = \left\langle \dd{z_{l-1}} \right\rangle.
\]
And in this case the consequent brackets of $E$ with $F_{l-1}$ do not recover the complete sequence of $F_i$, as  the distribution $E+\sum_{i=0}^{\infty} \operatorname{ad}^iE(F_{l-1})$ is completely integrable and, for example, does not contain $F_{k-1}$.

This motivates the following
\begin{dfn}
The \emph{exterior differential system of first kind} associated with the system of ODEs~\eqref{odekl} is given by a pair of vector distributions
$E$ and $F_1$ on the equation manifold $\E\subset J^l(\R,\R^2)$.

\emph{Symmetries of first kind} are the vector fields $S$ on $\E$ that preserve both $E$ and $F_1$, that is $[S,E]\subset E$ and $[S,F_1]\subset F_1$. 
\end{dfn}

Although the symmetries are defined as vector fields on the equation manifold $\E$, they are actually external symmetries of the equation. While the distribution $E$ corresponds to the internal geometry of the equation, the second distribution $F_1$ comes from the projection $\pi\colon J^l(\R,\R^2)\to J^0(\R,\R^2)$. So, in terminology of~\cite{ako,olver} symmetries of first kind could be also called \emph{point symmetries}.

\subsection{The EDS of second kind}
Another way of defining the exterior differential system by~\eqref{odekl} is to start from the mixed jet space $J^{k,l}(\R,\R^2)$ with the local coordinate system $(x,y,y_1,\dots,y_k,z,z_1,\dots,z_l)$ and define the equation submanifold $\E$ by equations
\[
y_k=0, z_l=0.
\]
Then, as above, the coordinates  $(x,y,z,y_1,\dots,y_{k-1},z_1,\dots,z_{l-1})$ form a coordinate system on $\E$. Similar to the jet space $J^l(\R,\R^2)$ we define the contact system $C$ on $J^{k,l}(\R,\R^2)$ by the collection of contact forms
\begin{align*}
& dy_i - y_{i+1}dx, i= 0,\dots,k-1;
& dz_j - z_{j+1}dx, j= 0,\dots,l-1.
\end{align*}
The restrictions of these forms to the equation manifold $\E\subset J^{k,l}(\R,\R^2)$ define the same 
one-dimensional vector distribution~$E$ given by~\eqref{distE}. So, up to now everything looks very similar to the above.

Let $\pi\colon \E\to J^{k-1,l-1}(\R,\R^2)$ be the projection of $\E$ to the lower order jet space. The pull-back of the contact distribution from $J^{k-1,l-1}(\R,\R^2)$ to $\E$ is a 3-dimensional vector distribution $D$ on $\E$ defined by 1-forms:
\begin{align*}
& dy_i - y_{i+1}dx, i= 0,\dots,k-2;\\
& dz_j - z_{j+1}dx, j= 0,\dots,l-2.
\end{align*}
or, in terms of vector fields by:
\begin{align*}
X &= \dd{x} + y_1\dd{y} + \dots + y_{k-1}\dd{y_{k-2}}+ z_1\dd{z} + 
\dots + z_{l-1}\dd{z_{l-2}},\\
Y &= \dd{y_{k-1}},\\
Z &= \dd{z_{l-1}}.
\end{align*}

\begin{dfn}
The \emph{exterior differential system of second kind} associated with the system of ODEs~\eqref{odekl} is given by a pair of vector distributions $E \subset D$ on the equation manifold $\E\subset J^{k,l}(\R,\R^2)$.

\emph{Symmetries of second kind} are the vector fields $S$ on $\E$ that preserve both $D$ and $E$, that is $[S,D]\subset D$ and $[S,E]\subset E$. 
\end{dfn}

As in case of symmetries of first kind, symmetries of second kind are also external symmetries of the equation. The second distribution $D$ comes from the projection $\pi\colon J^{k,l}(\R,\R^2)\to J^{k-1,l-1}(\R,\R^2)$ and the contact structure on $J^{k-1,l-1}(\R,\R^2)$. So, in terminology of~\cite{ako,olver} symmetries of second kind could be also called \emph{contact symmetries}. However, we avoid this terminology, as in
our case there is to inclusion between Lie algebras of symmetries of first and second kind.

As in the case of differential systems of first kind, we can define a sequence of transversal completely integrable distributions in the following way. Let $F_{l-1}$ be the 2-dimensional completely integrable distribution generated by vector fields $Y$ and $Z$. As this is the only 2-dimensional completely integrable subdistribution of $D$, it is preserved by all symmetries of $D$. As $D=E\oplus F_{l-1}$, we see that the symmetries of second kind are exactly the vector fields preserving both $E$ and $F_{l-1}$.

Taking iterative brackets, we can further define:
\[
F_{i-1} = F_i + [E,F_i]\quad\text{for all } i \le l-1.
\]
These distributions are all transversal to $E$ and completely integrable.

\section{Symmetry computation} \label{sec:sym}

All symmetries of first kind are prolongations of the vector fields from $J^0(\R,\R^2)=\R^3$. See, for example,~\cite{olver} for the exact prolongation formulas.
\begin{prop}\label{propI}
The Lie algebra $\g^{I}_{k,l}$ of symmetries of first kind of equation~\eqref{odekl} is spanned over $\R$ by the following vector fields:

If $k=2$, $l>2$:
\begin{align}
& \dd{x}, \dd{y}, \label{a1}\\
& x\dd{x}, x\dd{y}, y\dd{x}, y\dd{y},z\dd{z}, \label{a2}\\
& x^2\dd{x}+xy\dd{y}+(l-1)xz\dd{z}, xy\dd{x} + y^2\dd{y} +(l-1)yz\dd{z},\label{a3}\\
& x^iy^j\dd{z}, 0\le i+j\le l-1.\label{a4}
\end{align}
Algebraically $\g^{I}_{2,l}$ is isomorphic to $\gl(3,\R)\rtt S^{l-1}(\R^3)$, where the subalgebra $\gl(3,\R)$ is spanned by~\eqref{a1}--\eqref{a3} and the commutative ideal $S^{l-1}(\R^3)$ is spanned by~\eqref{a4}.

\medskip
If $2<k<l$:
\begin{align}
& \dd{x}, x\dd{x}, x^2\dd{x}+(k-1)xy\dd{y}+(l-1)xz\dd{z}, y\dd{y}, z\dd{z},\label{b1}\\
& \dd{y}, x\dd{y}, \dots, x^{k-1}\dd{y},\label{b2}\\
& x^iy^j\dd{z}, 0\le i+(k-1)j\le l-1.\label{b3}
\end{align}
Algebraically $\g^{I}_{k,l}$ is isomorphic to $(\R\times\gl(2,\R)\rtt V_k)\rtt \left(\sum_{i=0}^{[l/k]} V_{l-ki}\right)$, where $V_r$ is an $r$-dimensional irreducible representation of $\gl(2,\R)$. The subalgebra $\R\times\gl(2,\R)$ is spanned by~\eqref{b1}, the subalgebra $V_k$ is spanned by~\eqref{b2}, and the commutative ideal $\sum_{i=0}^{[l/k]} V_{l-ki}$ is spanned by~\eqref{b3}.
\end{prop}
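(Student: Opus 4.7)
My plan is to derive and solve the determining equations for point symmetries of~\eqref{odekl} directly. I would write a candidate as $X=\xi(x,y,z)\p_x+\eta(x,y,z)\p_y+\zeta(x,y,z)\p_z$ and form its prolongation on $J^l(\R,\R^2)$ via the standard recursion
\[
\eta_{j+1}=D\eta_j-y_{j+1}D\xi,\qquad \zeta_{j+1}=D\zeta_j-z_{j+1}D\xi,
\]
with $D$ the total derivative. Tangency of the prolongation to $\E$ is then equivalent to the two polynomial identities $\eta_k|_\E=0$ and $\zeta_l|_\E=0$ in the fibre coordinates $y_1,\dots,y_{k-1},z_1,\dots,z_{l-1}$.

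I would first attack $\eta_k|_\E=0$, which involves only $\xi$ and $\eta$. Decomposing by monomial degree in the $z_j$'s, the top-degree terms force $\xi$ and $\eta$ to be independent of $z$ (any $z$-derivative would generate an uncancellable $z_1$-factor). Once $\xi=\xi(x,y)$ and $\eta=\eta(x,y)$, the equation reduces to the classical determining system for point symmetries of the single trivial ODE $y^{(k)}=0$; the resulting algebra is $\sll(3,\R)$ in its projective realization on $(x,y)$ when $k=2$, and $\gl(2,\R)\rtt V_k$ with $V_k=\langle\p_y,x\p_y,\dots,x^{k-1}\p_y\rangle$ when $k\ge 3$. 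This recovers the $\xi,\eta$-parts of~\eqref{a1}--\eqref{a3} and~\eqref{b1}--\eqref{b2}.

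Next, with $(\xi,\eta)$ determined, I would tackle $\zeta_l|_\E=0$. An analogous $z_j$-monomial analysis forces $\zeta$ to be affine in $z$, say $\zeta=\lambda(x,y)z+\mu(x,y)$. Weight compatibility of $\lambda$ with the action already found for $(\xi,\eta)$ pins $\lambda$ down to the constant giving the scaling $z\p_z$, which also produces the $(l-1)xz\p_z$ correction to the projective generator (extending $\sll(3,\R)$ to $\gl(3,\R)$ for $k=2$, or giving the extra $\R$-factor in $\R\times\gl(2,\R)$ for $k\ge 3$). The coefficient $\mu$ must satisfy $D^l\mu|_{y^{(k)}=0}=0$, i.e.\ the $l$-th total derivative of $\mu(x,y(x))$ vanishes for every polynomial $y(x)$ of degree less than $k$. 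A direct monomial count shows that the kernel is $\{x^iy^j:0\le i+j\le l-1\}$ when $k=2$, and $\{x^iy^j:0\le i+(k-1)j\le l-1\}$ when $k\ge 3$, yielding exactly~\eqref{a4} and~\eqref{b3}.

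Finally, to read off the algebraic structure, I would note that the fields $x^iy^j\p_z$ commute pairwise and form an abelian ideal, on which the complement acts through its natural action on functions of $(x,y)$. For $k=2$, this action of $\gl(3,\R)$ on $\{x^iy^j:i+j\le l-1\}$ (with $1,x,y$ as a basis of $\R^3$) identifies the ideal with the $(l-1)$st symmetric power of the defining representation, giving $\g^{I}_{2,l}\cong\gl(3,\R)\rtt S^{l-1}(\R^3)$. For $k>2$, a weight-space decomposition under the Cartan of $\gl(2,\R)$ groups the monomials~\eqref{b3} into the listed $\gl(2,\R)$-irreducibles, checked by verifying that each proposed highest-weight monomial is annihilated by the raising operator in~\eqref{b1}. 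The principal technical obstacle is this last combinatorial step---matching the monomial range in~\eqref{b3} with the irreducible decomposition and confirming the semidirect-product action.
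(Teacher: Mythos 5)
Your proposal is correct and takes essentially the same route as the paper, whose proof consists of the single sentence ``prolong an arbitrary point field $A\dd{x}+B\dd{y}+C\dd{z}$, impose tangency to $\E$, and solve the resulting system of PDEs.'' Your elaboration --- using the $z_j$-monomial decomposition to force $\xi,\eta$ independent of $z$ and $\zeta$ affine in $z$, reducing to the known symmetry algebras of the single equations, and identifying the kernel of $D^l$ on $\mu(x,y)$ restricted to solutions of $y^{(k)}=0$ with the stated monomial ranges --- is a sound filling-in of the details the paper omits.
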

\begin{proof}
Take an arbitrary vector field $A\dd{x} + B\dd{y}+C\dd{z}$ on $\R^3$, prolong it to $J^l(\R,\R^2)$ and check that it preserves the equation. This results in a system of PDEs on the functions $A,B,C$, which are easy to solve. This results in the above Lie algebras of point symmetries.
\end{proof}

To describe the symmetries of the second kind we need the following technical result, which is also of its own interest. Let $X$ be the operator of total derivative on $J^{r+1}(\R,\R)$ with the standard coordinate system $(x,z_0,\dots,z_{r+1})$:
\[
X = \dd{x} + z_1\dd{z_0} + \dots + z_r \dd{z_{r-1}}+\dots +z_{r+1} \dd{z_r}.
\]
Denote by $g_{i,j}$ the following function on $J^{r+1}(\R,\R)$:
\[
g_{i,j} = \frac{x^{i+j}}{(i+j)!} X^i(z_0/x^j), \quad i,j\ge 0.
\]
Let $g_{i,j}^{(s)}=\frac{\partial^{s} g_{i,j}}{\partial x^s}$ for any $s\ge 0$. In particular, we see that $g_{i,j}^{(0)}=g_{i,j}$ and $g_{i,j}^{(s)}=0$ for $s>i$.

It is easy to see that each $g_{i,j}^{(s)}$ is linear in $z_{s},z_{s+1},\dots,z_i$, polynomial in $x$ and having a constant coefficient at $z_{s}$. In particular, for fixed $i,j$ all functions $g_{i,j}^{(s)}$, $s=0,\dots,i$ are algebraically independent (even over $\R[x]$). 
\begin{lem}\label{lem:g}
\begin{enumerate}
\item[(a)] The following identity holds:
\begin{equation}\label{eq:ij}
g_{i,j} - (x/i)g^{(1)}_{i,j} + j g_{i-1,j+1} = 0,\quad\text{for all } i,j>0.
\end{equation}
\item[(b)] The space of solutions for the system of linear equations:
\begin{equation}\label{eq:d2}
\begin{aligned}
X^2 ( f ) &= 0;\\
\frac{\partial f}{\partial z_{r+1}} &= 0,
\end{aligned}
\end{equation}
is $r+3$-dimensional and is spanned by the functions $1$, $x$ and $g_{r,2}^{(s)}$, $s=0,\dots,r$. 
\end{enumerate}
\item[(c)] The space of solutions for the following system of linear equations:
\begin{equation}\label{eq:dk}
\begin{aligned}
X^{p+1} ( f ) &= 0, \quad p\ge 1;\\
\frac{\partial f}{\partial z_{r+1}} &= 0,
\end{aligned}
\end{equation}
is spanned by $W^p=\{ f_1\dots f_{p} \mid f_i\in W\}$, where $W$ is the above $(r+3)$-dimensional space of solutions of~\eqref{eq:d2}.
\item[(d)] The space of solutions for the following system of linear equations:
\begin{equation}\label{eq:dkl}
\begin{aligned}
X^{p+1} ( f ) &= 0, \quad p\ge 1;\\
\frac{\partial f}{\partial z_{r+1-q}} & =\dots =\frac{\partial f}{\partial z_{r+1}} = 0,\quad q\ge 0;
\end{aligned}
\end{equation}
is spanned by $x^i g_{r-q,q+2}^{(s_1)} \dots g_{r-q,q+2}^{(s_j)}$, where $i+(q+1)j\le p$.
\end{lem}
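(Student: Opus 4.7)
Part~(a) is a direct Leibniz computation. Expanding $D^i(z_0\,x^{-j})$ by the product rule yields the explicit formula
\[
g_{i,j} = \frac{1}{(i+j)!}\sum_{k=0}^i \binom{i}{k}(-1)^{i-k}\frac{(j+i-k-1)!}{(j-1)!}\,z_k\,x^k;
\]
differentiating in $x$ and invoking the Pascal-type identities $\binom{i}{k}(k/i) = \binom{i-1}{k-1}$ and $\binom{i}{k}-\binom{i-1}{k-1} = \binom{i-1}{k}$, the combination $g_{i,j}-(x/i)g_{i,j}^{(1)}$ collapses term-by-term to $-j\,g_{i-1,j+1}$.

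For part~(b), the cleanest way to verify that the $g_{r,2}^{(s)}$ are solutions is by evaluation along an arbitrary polynomial $z(x) = \sum_{k=0}^{r+1} a_k\,x^k$ (the generic section compatible with the truncated total derivative on $J^{r+1}$): the formula $g_{r,2}|_{z(x)} = \frac{1}{(r+2)!}\sum_k a_k (k-2)(k-3)\cdots(k-r-1)\, x^k$ has its Pochhammer product vanishing for $k=2,3,\dots,r+1$, leaving only $k=0,1$, i.e.\ a polynomial of degree $\le 1$ in $x$. Iterating $\partial_x$ (which commutes with $D$) preserves the property $D^2(\cdot)=0$, and linear independence of $1,x,g_{r,2}^{(0)},\dots,g_{r,2}^{(r)}$ follows from the distinct lowest $z$-indices noted just before the lemma.

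The main difficulty is the upper bound on the dimension, which I would establish by induction on $r$. Expanding $D^2 f$ as a polynomial in $z_{r+1}$ and setting each coefficient to zero: the $z_{r+1}^2$-coefficient forces $f = A + B\,z_r$ with $A,B$ independent of $z_r$; the $z_{r+1}^1$-coefficient forces $B$ to be independent of $z_{r-1}$ and expresses $\partial A/\partial z_{r-1}$ uniquely in terms of $B$ and its total derivative; substitution into the $z_{r+1}^0$-coefficient then yields lower-jet instances of the same equation on $B$ and on the ``free part'' of $A$, giving by induction the desired $\dim = r+3$.

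For part~(c), the inclusion $W^p\subseteq\{D^{p+1}f=0\}$ is a short Leibniz computation: every summand of $D^{p+1}(f_1\cdots f_p)$ contains a factor $D^{a_i}f_i$ with some $a_i\ge 2$, which vanishes since $f_i\in W$. The reverse inclusion is by induction on $p$, with base case $p=1$ given by~(b); the inductive step analyzes a solution of $D^{p+1}f=0$ via its top-order dependence on $z_r$ and reduces it to a product of a $W$-element with a level-$(p-1)$ solution. Part~(d) then follows by combining (a) and (c): the identity in~(a) lets one recursively express $g_{r-q,q+2}$ in the algebra generated by $x$ and the $g_{r,2}^{(s)}$, so spanning follows from~(c), while the weighted degree constraint $i+(q+1)j\le p$ tracks the ``effective order'' each $g_{r-q,q+2}^{(s_i)}$ contributes in the recursion from~(a).
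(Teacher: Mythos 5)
Your proposal is correct in outline but takes a genuinely different route from the paper at two points, one of which is an improvement and one of which is a step backwards. For (a), the paper avoids your explicit binomial expansion entirely: it writes $z_0/x^j=x\cdot(z_0/x^{j+1})$ and applies the Leibniz rule to the product with $x$, which terminates after two terms and gives $D^i(z_0/x^j)=xD^i(z_0/x^{j+1})+iD^{i-1}(z_0/x^{j+1})$; multiplying by $x^{i+j}/(i+j)!$ and using $[D,\partial/\partial x]=0$ yields \eqref{eq:ij} in one line. Your computation is equally valid (the coefficient $(-1)^{i-k}(j+i-k-1)!/(j-1)!$ and the two Pascal identities check out), just heavier. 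For the verification half of (b), your restriction to prolongations of polynomials $z(x)$ of degree $\le r+1$ is genuinely cleaner than the paper's page-long binomial computation of $D^2(g_{r,2})$; it is legitimate because every point of $J^{r+1}(\R,\R)$ with $x\ne 0$ is the $(r+1)$-jet of exactly one such polynomial and the truncated total derivative agrees with $d/dx$ along precisely these prolongations (since $z^{(r+2)}=0$), so the vanishing of a polynomial function on this family forces it to vanish identically. Where you lose ground is the dimension bound in (b): the paper's commutator argument (setting $Z_i=\partial/\partial z_i$, using $[Z_i,D]=Z_{i-1}$ to derive $(Z_{i+1}D-(r+1-i)Z_i)f=0$ for all $i$ and hence $Z_iZ_jf=0$ for all $i,j\le r$) shows in one stroke that $f$ is \emph{linear in all the $z_i$ simultaneously}, after which the count is immediate; your induction on $r$ is plausible and its first two steps are verifiably correct ($f_{z_rz_r}=0$ gives $f=A+Bz_r$, and the $z_{r+1}$-coefficient gives $B_{z_{r-1}}=0$ together with $A_{z_{r-1}}=-2\widetilde{D}B$), but the constant-in-$z_{r+1}$ step requires nontrivial bookkeeping that you have not carried through, so this half remains a sketch. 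For (c) and (d) you and the paper are at a comparable level of detail (the paper also only sketches the reverse inclusions); your pigeonhole argument for $W^p\subseteq\ker D^{p+1}$ is correct, and your reading of how (a) enters (d) — each $g_{r-q,q+2}^{(s)}$ is a combination of the $g_{r,2}^{(s')}$ with $x$-polynomial coefficients of degree $\le q$, whence the weight condition $i+(q+1)j\le p$ — is the right one, though note that (a) is also needed in the harder direction, namely the induction on $q$ showing that no other elements of $W^p$ survive the extra constraints $\partial f/\partial z_{r+1-q}=\dots=\partial f/\partial z_{r}=0$.
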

\begin{proof}
(a) Easily follows from the fact that $X$ commutes with $\dd{x}$ and  from the identity:
\[
X^i(z_0/x^j) = X^i(x\cdot z_0/x^{j+1}) = x X^i(z_0/x^{j+1}) + i X^{i-1}(z_0/x^{j+1}).
\]

(b) It is clear that $1$ and $x$ are solutions of system~\eqref{eq:d2}. Further, we have:
\begin{multline*}
X^2(g_{r,2}) =\tfrac{x^r}{r!} X^r(z_0/x^2) + 2\tfrac{x^{r+1}}{(r+1)!}X^{r+1}(z_0/x^2) + \tfrac{x^{r+2}}{(r+2)!}X^{r+2}(z_0/x^2) \\
= \tfrac{x^r}{r!} \sum_{i=0}^r \tbinom{r}{i} X^{r-i}(x^{-2})X^i(z_0) + 2\tfrac{x^{r+1}}{(r+1)!}  \sum_{i=0}^{r+1} \tbinom{r+1}{i} X^{r+1-i}(x^{-2})X^i(z_0) + \\ + \tfrac{x^{r+2}}{(r+2)!} \sum_{i=0}^{r+1} \tbinom{r+2}{i} X^{r+2-i}(x^{-2})X^i(z_0)\\ 
= \tfrac{x^r}{r!} \sum_{i=0}^{r+1} z_{i}\left(\tbinom{r}{i} X^{r-i}(x^{-2})+\tfrac{2x}{r+1}\tbinom{r+1}{i}X^{r+1-i}(x^{-2})+\tfrac{x^2}{(r+1)(r+2)}\tbinom{r+2}{i}X^{r+2-i}(x^{-2})\right)\\
= x^r\sum_{i=0}^{r+1} \tfrac{z_i}{i!} \left(\tfrac{1}{(r-i)!}X^{r-i}(x^{-2})+\tfrac{2x}{(r+1-i)!}X^{r+1-i}(x^{-2})+\tfrac{x^2}{(r+2-i)!}X^{r+2-i}(x^{-2})\right)\\
= x^r\sum_{i=0}^{r+1} (-1)^{r-i}\tfrac{z_i}{i!} \left(\tfrac{1}{(r-i)!}\tfrac{(r+1-i)!}{x^{r-i+2}}-\tfrac{2x}{(r+1-i)!}\tfrac{(r+2-i)!}{x^{r+3-i}}+\tfrac{x^2}{(r+2-i)!}\tfrac{(r+3-i)!}{x^{r+4-i}}\right)\\
= \sum_{i=0}^{r+1} (-1)^{r-i}\frac{z_i x^{i-2}}{i!} ((r+1-i) - 2(r+2-i) + (r+3-i)) = 0.
\end{multline*}
(Here the terms which do not make sense such as $X^{-1}(x^{-2})$ are assumed to be $0$.) As operators $X$ and $\dd{x}$ commute, we see that all functions $g_{r,2}^{(k)}$, $k=0,\dots,r$, indeed satisfy system~\eqref{eq:d2}. 

Denote by $Z_i$, $i=0,\dots,r+1$, the differential operator $\dd{z_{i}}$. Note that $[Z_i, X] = Z_{i-1}$, or what is the same $X Z_i = Z_i X - Z_{i-1}$ for all $i=1,\dots,r+1$. Taking $i=r+1$, we get that
\[
(Z_{r+1} X - Z_r)f = 0 
\] 
for any solution $f$ of system~\eqref{eq:d2}. Multiplying this identity by $X$ from the left and using bracket relations between $X$ and $Z_i$, we get
\[
X(Z_{r+1} X - Z_r)f = (Z_{r+1}X^2-2Z_r X + Z_{r-1})f= (-2Z_r X + Z_{r-1})f = 0.
\]
Proceeding in a similar way we get that:
\[
((r+1-i)Z_{i+1} X - Z_i)f=0\quad\text{for all }i=0,\dots,r.
\]
and also $Z_0Xf=XZ_0f=0$. Multiplying these identities by $Z_j$, $j=0,\dots,r+1$ on the left and taking differences we get:
\begin{align*}
& Z_i Z_0 f  =0,\\ 
& ((r+1-j)Z_iZ_{j+1}-(r+1-i)Z_jZ_{i+1})f = 0,
\end{align*}
for all $i,j=0,\dots,r$. This generates all quadratic relations $Z_iZ_j(f)=0$ for $i,j=0,\dots,r$. Thus, $f$ is linear with respect to $z_0,\dots,z_r$. Then easy computation shows that the solution space of system~\eqref{eq:d2} for has dimension $r+3$.

(c) It is easy to see that all elements of $W^p$ satisfy system~\eqref{eq:dk} for arbitrary $p\ge 2$. The proof that the solution space of system~\eqref{eq:dk} for any $p\ge 2$ coincides with $W^p$ follows from the dimension count similar to the case $p=1$.

(d) Solutions of system~\eqref{eq:dkl} are exactly the elements of the space $W^p$ that do not depend on $z_{r+1-i}$, $i=0,\dots,q$. Identity~\eqref{eq:ij} for $i=r$, $j=2$ implies that $g_{r-1,3}$ does not depend on $z_{r},z_{r+1}$ and lies in $W^2$. Similarly, by induction we get that $g_{r-q,q+2}$ does not depend on $z_{r+1-i}$, $i=0,\dots,q$, and lies in $W^{q+1}$. Clearly, all functions $g_{r-q,q+2}^{(s)}$, $s=0,\dots,r-q$, also have this property. This implies that all products $x^i g_{r-q,q+2}^{(s_1)} \dots g_{r-q,q+2}^{(s_j)}$, with $i+(q+1)j\le p$ are (linearly independent) solutions of~\eqref{eq:dkl}. The dimension count similar to~(b) proves that this is the complete solution space of system~\eqref{eq:dkl}.
\end{proof}

The symmetries of second kind are not always prolongations of vector fields on $\R^3$, but they can still be given as prolongations of vector fields from mixed jet space $J^{0,l-k}(\R,\R^2)$ with the standard coordinate system $(x,y,z,z_1,\dots,z_{l-k})$. The vector fields from $J^{0,0}(\R,\R^2)=\R^3$ can be naturally prolonged to $J^{0,l-k}(\R,\R^2)$ using the standard prolongation formulas for variables $z_i$ alone.

\begin{prop}\label{propII}
The Lie algebra $\g^{II}_{k,l}$ of symmetries of second kind is spanned over $\R$ by the following vector fields:

If $k=2,l=3$:
\begin{align*}
& (x^2z_1/2 -zx)\dd{x}+(xyz_1/2-yz)\dd{y}+(x^2z_1^2/4-z^2)\dd{z}+(xz_1^2/2-zz_1)\dd{z_1},\\
& 2(xz_1-z)\dd{x}+yz_1\dd{y}+xz_1^2\dd{z}+z_1^2\dd{z_1},\quad z_1\dd{x}+z_1^2/2\dd{z},\\
& x^2 \dd{x} +xy\dd{y}+2xz\dd{z}+2z\dd{z_1},\\
& x\dd{x}-z_1\dd{z_1},\quad z\dd{z}+z_1\dd{z_1},\\
& x^2 \dd{z}+2x\dd{z_1},\quad  x\dd{z}+\dd{z_1},\\
& \dd{x},\dd{z},\quad  y\dd{y},\\
& \quad (xz_1-2z)\dd{y}, x\dd{y}, z_1\dd{y}, \dd{y}.
\end{align*}
Algebraically $\g^{II}_{2,3}$ is isomorphic to $\csp(4,\R)\rtt \R^4$, where the subalgebra $\csp(4,\R)$ is spanned by vector fields in the first 6 lines and the commutative ideal $\R^4$ is spanned by the vector fields in the last line.

\medskip
If $2\le k<l$ and $(k,l)\ne (2,3)$:
\begin{align}
& \dd{x}, x\dd{x}, x^2\dd{x}+(k-1)xy\dd{y}+(l-1)xz\dd{z}, y\dd{y}, z\dd{z},\label{d1}\\
& \dd{z}, x\dd{z},\dots,x^{l-1}\dd{z},\label{d2}\\
& \dd{y}, x\dd{y},\dots,x^{k-1}\dd{y},\label{d3}\\
& g_{l-k,k}^{(s)}\dd{y}, 0\le s \le l-k, \label{d4}
\end{align}
Here vector fields~\eqref{d1} and~\eqref{d2} are assumed to be prolonged to $J^{0,l-k}(\R,\R^2)$. 

Algebraically $\g^{II}_{k,l}$ is isomorphic to $(\R\times \gl(2,\R)\rtt W)\rtt {V_k\oplus V_l}$, where $\R\times \gl(2,\R)$ is spanned by~\eqref{d1}, $V_l$ is spanned by \eqref{d2}, $V_k$ is spanned by~\eqref{d3}, and $W$ is an $(l-k+1)$-dimensional space spanned by~\eqref{d4}, which can be identified with an irreducible submodule of $\Hom(V_l, V_k)\subset \gl(V_k\oplus V_l)$.
\end{prop}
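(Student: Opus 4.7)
The strategy mirrors that of Proposition~\ref{propI}: pick an ansatz for the symmetry vector field, impose the symmetry conditions, and use the algebraic machinery of Lemma~\ref{lem:g} to enumerate the solutions. The key difference is that the correct ambient space for the ansatz is now $J^{0,l-k}(\R,\R^2)$, not $\R^3=J^{0,0}$. Specifically, any second-kind symmetry $S$ preserves the entire flag $F_{l-1}\subset\dots\subset F_1$, and hence in particular the rank-$2(k-1)$ integrable distribution $F_{l-k+1}=\langle\partial_{y_i},\partial_{z_j}\mid 1\le i\le k-1,\ l-k+1\le j\le l-1\rangle$, whose leaf space is canonically $J^{0,l-k}(\R,\R^2)$. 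Hence $S$ descends to a vector field $V$ on $J^{0,l-k}$ and is recovered from $V$ by the standard contact prolongation.

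Writing $V=A\partial_x+B\partial_y+C\partial_z+\sum_{i=1}^{l-k}C_i\partial_{z_i}$ with $A,C,C_i$ independent of $y$ (forced by the requirement that $S$ preserve $D$ modulo $F_{l-1}$), the contact prolongation dictates $C_i=D^iH+z_{i+1}A$ with $H=C-z_1A$. Tangency of the full prolongation to $\E$ translates into the two decoupled equations
\begin{equation*}
\bar D^l(H)=0,\qquad \bar D^k(B-y_1A)=0,
\end{equation*}
where $\bar D$ is the truncated total derivative on $J^{l-1}(\R,\R)$ (obtained from $D$ by deleting the $z_l\partial_{z_{l-1}}$ term). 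Both are instances of system~\eqref{eq:dkl} with $r=l-2$, $q=k-2$; applying Lemma~\ref{lem:g}(d) with $p=k-1$ to the $y$-equation produces exactly the polynomials $\{1,x,\dots,x^{k-1}\}$ from $j=0$ and the $l-k+1$ functions $\{g_{l-k,k}^{(s)}\mid 0\le s\le l-k\}$ from $j=1$, $i=0$, yielding~\eqref{d3} and~\eqref{d4}.

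For the $z$-equation, applying the same lemma with $p=l-1$ produces an a priori much larger solution space. The crucial additional input is preservation of $F_{l-1}$, which translates to $\partial_{z_{l-1}}C^{(j)}=0$ for $j=0,\dots,l-2$. A direct computation using $C^{(j)}=D^jH+z_{j+1}A$ shows that, for $l\ge k+2$, this forces $H_{z_{l-k}}=0$, which then iterates to force $H_{z_{l-k-1}}=0$, and so on, until $H$ becomes affine in $z_1$. Rerunning Lemma~\ref{lem:g}(d) for $H$ on $J^{0,1}$ gives the smaller spanning set matching exactly the $l+4$ non-trivial generating functions behind~\eqref{d1}--\eqref{d2}. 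For $l=k+1$ with $k\ge 3$, no such reduction is needed because Lemma~\ref{lem:g}(d) already admits only $j=0,1$ (the $j=2$ bound $i+2(k-1)\le k$ is infeasible), and the same $l+4$ functions appear. In the unique remaining case $(k,l)=(2,3)$, neither mechanism is active: $l=k+1$ precludes the $F_{l-1}$-induced reduction, yet $k=2$ admits $j=2$ quadratic products in Lemma~\ref{lem:g}(c); the additional three solutions $g_{1,2}^{(s_1)}g_{1,2}^{(s_2)}$ account for the extra generators in the first six lines of the statement, which by direct inspection close into $\csp(4,\R)$.

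The algebraic identification is then routine. Brackets between~\eqref{d1} and each of~\eqref{d2},~\eqref{d3},~\eqref{d4} show that~\eqref{d1} spans $\R\oplus\gl(2,\R)$ and makes~\eqref{d2},~\eqref{d3},~\eqref{d4} into the $\gl(2,\R)$-modules $V_l$, $V_k$ and a certain $(l-k+1)$-dimensional module, respectively. A highest-weight count identifies the latter with the unique irreducible submodule of $\Hom(V_l,V_k)\subset\gl(V_k\oplus V_l)$ of that dimension, yielding the stated semidirect-product structure $(\R\oplus\gl(2,\R)\rtt W)\rtt(V_k\oplus V_l)$. The main obstacle, and the only technically demanding step, is the $F_{l-1}$-reduction argument above: showing that the constraints $\partial_{z_{l-1}}C^{(j)}=0$ iterate cleanly to collapse $H$ down to the point-vector-field case for all $(k,l)\ne(2,3)$, while vanishing exactly when $(k,l)=(2,3)$ so that the $j=2$ quadratic solutions of Lemma~\ref{lem:g}(c) survive.
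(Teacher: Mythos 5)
Your proposal is correct and follows essentially the same route as the paper: descend to $J^{0,l-k}(\R,\R^2)$, split the symmetry into a contact part in the $(x,z)$-variables plus a kernel of the form $(cy+f)\dd{y}$, and classify the kernel by Lemma~\ref{lem:g}(d) with $r=l-2$, $p=k-1$, $q=k-2$, which is exactly how the paper obtains \eqref{d3}--\eqref{d4}. Where you genuinely diverge is the $(x,z)$-part: the paper simply records that it is a contact symmetry of $z^{(l)}=0$, that all such symmetries extend, and that this algebra is $\mathfrak{sp}(4,\R)$ for $l=3$ and the point-symmetry algebra otherwise, whereas you re-derive this classification from Lemma~\ref{lem:g}(d); your identification of the three extra $(2,3)$-generators with the quadratic generating functions $g_{1,2}^{(s_1)}g_{1,2}^{(s_2)}$ is exactly right and gives a more unified explanation of the exceptional case than the paper's appeal to the classical $\mathfrak{sp}(4,\R)$ result. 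Two caveats. First, your ``$F_{l-1}$-induced reduction'' conflates two separate mechanisms: preservation of the flag forces $H=H(x,z_0,z_1)$ (the Lie--B\"acklund step, which is what kills the dependence on $z_2,\dots,z_{l-k}$ when $l\ge k+2$), but affineness in $z_1$ for $l\ge 4$ is not produced by that iteration --- it follows from $D^lH=0$ itself via the inequality $i+(l-2)j\le l-1$ of Lemma~\ref{lem:g}(d) with $q=l-3$, i.e.\ by the very same $j\le 1$ count you invoke in the $l=k+1$ case; so the dichotomy is governed uniformly by that inequality and the step you single out as ``the only technically demanding'' one is partly a misdiagnosis. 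Second, your two equations are not actually decoupled (since $A=-H_{z_1}$ enters both), and you never check that every solution $H$ of the $z$-equation admits a compatible $y$-component $B$; the paper flags this surjectivity as a separate ``direct computation'', and it is genuinely needed in the $(2,3)$ case to produce the $y\dd{y}$-terms of the three non-point contact symmetries.
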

\begin{proof}
Any symmetry of second kind is a prolongation of the vector field 
\[
S = A\dd{x}+B\dd{y}+C_0\dd{z}+C_1\dd{z_1}+\dots+C_{l-k}\dd{z_{l-k}}
\] 
from the mixed order jet space $J^{0,l-k}(\R,\R^2)$. As it preserves contact forms $dz_i-z_{i+1}d{x}$, $i=0,\dots,l-k-1$, it is easy to see that it projects to a contact vector field on $J^{l-k}(\R,\R)$:
\[
\overline S = A \dd{x} + C_0\dd{z_0} + \dots + C_{l-k}\dd{z_{l-k}}.
\]
Moreover, simple computation shows that $\overline S$  is contact symmetry of the equation $z_l=0$, while the kernel of the projection $S\mapsto \overline S$ consists of vector fields:
\[
\big(cy + f(x,z_0,\dots,z_{l-k})\big)\dd{y},
\]
where the function $f$ satisfies system~\eqref{eq:dkl} for $r=l-2$, $p=k-1$ and $q=k-2$.

Direct computation shows that all contact  symmetries of the equation $z_l=0$ can be extended to the symmetries of the EDS of the second kind. In particular, in case $l=3$ the contact symmetry algebra is isomorphic to $\mathfrak{sp}(4,\R)$, and it is embedded as a subalgebra into $\g^{II}$. 

Further, item (d) of Lemma~\ref{lem:g} implies that $f$ is a linear combination of products $x^i g^{(s_1)}_{l-k,k}\dots g^{(s_j)}_{l-k,k}$, where $i+(k-1)j\le k-1$. Thus, either $j=0$, $i=0,\dots,k-1$ or $j=1$, $i=0$. This completes the proof of the theorem.
\end{proof}

\section{Symmetry algebras via Tanaka prolongation} \label{sec:tan}
In this section we show how symmetry algebras of first and second kind can be obtained as Tanaka prolongations of certain graded nilpotent Lie algebras.  

Let us fix the following frame on the equation $\E\subset J^{k,l}(\R,\R^2)$:
\begin{align*}
Y_i &= \dd{y_i},\quad i = 0,\dots, k-1;\\
Z_j &= \dd{z_j},\quad j = 0,\dots, l-1;\\
X &= \dd{x} + y_1\dd{y_0} + \dots + y_{k-1}\dd{y_{k-2}} + z_1\dd{z_0} + \dots + z_{l-1}\dd{z_{l-2}}.
\end{align*}
It is easy to see that these vector fields form a basis of a nilpotent Lie algebra $\n$ with the only non-zero Lie brackets being:
\[
[Y_i, X] = Y_{i-1}, \ [Z_j, X] = Z_{j-1}, \quad i,j\ge 1.
\]
Let us introduce two different gradings on $\n$. Both gradings are concentrated in negative degree and have $\deg X = -1$. The first grading is defined by $\deg Z_i = \deg Y_i = i-l$. In particular, $\deg Z_{l-1} = -1$ and $\deg Y_{k-1} = -1 + (k-l)$. The second grading is defined by $\deg Y_{i} = i-k, \deg Z_{j} = j-l$.  In particular, $\deg Y_{k-1} = \deg Z_{l-1} = -1$. To distinguish these two cases we shall denote the Lie algebra $\n$ equipped with a first grading as $\n^{I}$ and with a second grading as $\n^{II}$.

Let us recall the notion of Tanaka prolongation of graded nilpotent Lie algebras. Let $\m$ be an arbitrary negatively graded nilpotent Lie algebra of depth $\mu$, that is $\m=\sum_{i=1}^\mu \m_{-i}$. We recall that $\m$ is called \emph{fundamental}, if $\m$ is generated by $\m_{-1}$. 

For the above graded nilpotent Lie algebras we have:
\begin{align*}
\n^{I}_{-1} &= \langle X, Z_{l-1} \rangle\quad \text{for }  k < l;\\
\n^{I}_{-1} &= \langle X, Z_{l-1}, Y_{k-1} \rangle\quad \text{for }  k=l;\\
\n^{II}_{-1} &= \langle X, Z_{l-1}, Y_{k-1} \rangle\quad \text{for all }  k\le l.
\end{align*}
Thus, we see that $\n^{I}$ is \textbf{not fundamental} in case of $k<l$.

The \emph{universal  (Tanaka) prolongation of $\m$ } is defined as a largest graded Lie algebra $\g(\m)$ satisfying the following two conditions:
\begin{enumerate}
\item $\g_{i}(\m) = \m_{i}$ for all $i<0$;
\item for any $A\in \g_i(\m)$, $i\ge 0$, the equality $[A,\g_{-}(\m)]=0$ implies $A=0$.
\end{enumerate}
If $\m$ is fundamental, then the second condition can be replaced by:
\begin{enumerate}
\item[(2')] for any $A\in \g_i(\m)$, $i\ge 0$, the equality $[A,\g_{-1}(\m)]=0$ implies $A=0$.
\end{enumerate}

The universal Tanaka prolongation has a natural geometric sense in terms of symmetries of left-invariant EDS's on Lie groups. Namely, let $M$ be a Lie group with the Lie algebra $\m$. Define the sequence $T^{-k}M$ of left-invariant vector distributions on $M$ by the condition:
\[
T^{-k}_eM = \sum_{i=1}^k \m_{-i}.
\]
In extreme cases we have $T^0M=0$ and $T^{-\mu}M=TM$. If $\m$ is fundamental, then the complete sequence is defined by $T^{-1}M$.

We call the flag $\{ T^{-k} M\}$ \emph{the standard flag of type $\m$}. An infinitesimal symmetry of flag $\{ T^{-k} M\}$ is a vector field $X$ on $M$ such that $[X, T^{-i}M]\subset T^{-i}M$ for all $i=1,\dots,\mu$. Denote by $\bar\g(\m)$ the Lie algebra of all germs of infinitesimal symmetries at the identity of $M$. It is easy to show that this Lie algebra is well-defined. In general, it can be infinite-dimensional. But as it contains all germs of right-invariant vector fields on $M$, it is transitive at the identity $e$ (i.e., the values of all elements from $\bar \g (\m)$ at $e$ span all tangent space $T_eM$). 

This Lie algebra $\bar\g(\m)$ can be equipped with a natural decreasing filtration by setting:
\[
\bar\g^{-i}(\m) = \{ X\in \bar\g(\m) \mid X_e \subset T^{-i}_eM \},\quad\text{for all } i\ge 0.
\]
and extending it in the positive direction as follows:
\[
\bar\g^{i}(\m) = \{ X\in \bar\g(\m) \mid [X, \bar\g^{-j}(\m)]\subset \bar\g^{i-j}(\m) \forall j>0\},\quad \text{for } i > 0.
\]
In particular, $\bar\g^{-\mu}(\m)=\bar \g(\m)$ and $\bar\g^{0}(\m)$ is a subalgebra of all germs of infinitesimal symmetries that vanish at $e$. 

Finally, we define $\g(\m)$ as the graded Lie algebra associated with the filtered Lie algebra $\bar\g(\m)$:
\[
\g_i(\m) = \bar\g^{i}(\m)/\bar\g^{i+1}(\m),\quad\text{for all } i\in\mathbb{Z}.
\]

The fundamental result of N.~Tanaka and K.~Yamaguchi says: 
\begin{thm}[\cite{tan1,yama}]
\label{thm:yama}
The graded Lie algebra $\g(\m)$ coincides with the universal Tanaka prolongation of the graded nilpotent Lie algebra $\m$. If, moreover, $\g(\m)$ (or $\bar \g(\m)$) is finite-dimensional, then $\bar \g(\m)$ is isomorphic to $\g(\m)$ as filtered Lie algebras.
\end{thm}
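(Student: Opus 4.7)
The plan is to verify separately that the graded Lie algebra $\g(\m)$, constructed as the associated graded of the filtered Lie algebra $\bar{\g}(\m)$, satisfies the two defining conditions of the universal Tanaka prolongation, to prove the corresponding universality, and finally to lift the identification to the filtered level in the finite-dimensional case.

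First I would verify condition (1), that $\g_{i}(\m)=\m_{i}$ for $i<0$. Right-invariant vector fields on $M$ belong to $\bar{\g}(\m)$, their filtration degrees match their degrees in $\m$, and their values at $e$ span all of $T_eM$, so a dimension count along the filtration identifies $\bar{\g}^{-i}(\m)/\bar{\g}^{-i+1}(\m)$ with $\m_{-i}$. Condition (2), the nondegeneracy in nonnegative degrees, is essentially built into the filtration: if $X\in\bar{\g}^{i}(\m)$ with $i\ge 0$ satisfies $[X,\bar{\g}^{-j}(\m)]\subset\bar{\g}^{i-j+1}(\m)$ for every $j>0$, then by the very definition of the extension of the filtration to positive degrees, $X$ already lies in $\bar{\g}^{i+1}(\m)$, so represents zero in $\g_{i}(\m)$.

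The universality step requires showing any graded Lie algebra $\g'$ satisfying (1)--(2) embeds into $\g(\m)$. For each $i\ge 0$ I would realize $\g'_i$ as a space of germs of flag-preserving vector fields on $M$ at $e$, proceeding by induction: given the realization in degrees smaller than $i$, an element $X\in\g'_i$ is first described by its adjoint action $\m_{-}\to\g'$, which by (2) determines $X$ uniquely, and then lifted to a concrete germ through exponential coordinates on $M$ combined with the previously constructed realizations. This yields an injection $\g'\hookrightarrow\bar{\g}(\m)$ which descends to the desired embedding on the graded level.

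For the finite-dimensional claim, let $G$ be a connected Lie group with Lie algebra $\g(\m)$ and $G_0\subset G$ the closed subgroup corresponding to $\sum_{i\ge 0}\g_i(\m)$. The homogeneous space $G/G_0$ carries a $G$-invariant flag of type $\m$, and a germ of this flag at the identity coset is diffeomorphic to a germ of the standard flag on $M$ at $e$. The induced local $G$-action embeds $\g(\m)$ into $\bar{\g}(\m)$ as a filtered Lie subalgebra inducing the identity on the associated graded, and finite-dimensionality forces this embedding to be an equality of filtered Lie algebras. The main obstacle throughout is the universality step: converting an abstract graded element of positive degree into a concrete germ of a flag-preserving vector field requires a coherent inductive construction with a chosen splitting of the filtration, and condition (2) is essential at every step to ensure that the lift is determined uniquely by its graded symbol.
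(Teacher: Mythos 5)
The paper does not actually prove this statement: it is imported verbatim from Tanaka and Yamaguchi \cite{tan1,yama}, and the only original remark the authors add is that the known proof goes through unchanged for non-fundamental $\m$ once condition (2$'$) is replaced by condition (2). So there is no in-paper argument to compare yours against; I can only assess your sketch on its own terms against the standard proof it is clearly modelled on.

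Your outline of the easy direction is correct. The identification $\g_{-i}(\m)=\m_{-i}$ follows exactly as you say, since the evaluation map $\bar\g^{-i}(\m)/\bar\g^{-i+1}(\m)\to T^{-i}_eM/T^{-i+1}_eM$ is injective by the very definition of the negative filtration and surjective because of the right-invariant fields; and condition (2) for $i\ge 0$ is indeed built into the positive extension of the filtration. It is worth noting that you correctly phrase (2) with brackets against all of $\g_-(\m)$ rather than only $\g_{-1}(\m)$ --- this is precisely the modification the paper needs, since $\n^{I}$ is not fundamental for $k<l$, and your argument never uses generation by $\m_{-1}$. However, the two genuinely hard steps are only gestured at. First, the universality step: knowing that an abstract $X\in\g'_i$ is determined by its adjoint action on $\m_-$ gives injectivity of a map you have not yet constructed; producing an actual germ of a flag-preserving vector field with prescribed brackets against the right-invariant frame is the substantive content of Tanaka's prolongation construction, and ``exponential coordinates combined with the previously constructed realizations'' does not explain why the resulting formal vector field converges to, or can be chosen as, a genuine germ. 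Second, in the finite-dimensional claim, concluding that a filtered subalgebra with the same associated graded is the whole algebra requires the filtration on $\bar\g(\m)$ to be separated, i.e.\ that a symmetry vanishing to infinite filtered order at $e$ is zero; this does not follow from $\g_i(\m)=0$ for large $i$ alone (that only gives $\bar\g^{N+1}(\m)=\bar\g^{N+2}(\m)=\cdots$) and is a separate finite-type argument in Tanaka's proof. Neither omission makes the plan wrong, but both are where the actual work lives.
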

We note that Tanaka and Yamaguchi proved this result only in the case when $\m$ is fundamental, i.e.~is generated by $\m_{-1}$. But their proof works only without modification in the case of arbitrary graded nilpotent Lie algebras assuming we define the Tanaka prolongation as above.

This result can be generalized to the case when we additionally put extra linear restrictions at degree $0$. Note that $\g_0(\m)$ is exactly the Lie algebra $\Der_0(\m)$ of all degree preserving derivations of $\m$. Let $\g_0$ be an arbitrary subalgebra in $\Der_0(\m)$. Then we can define the \emph{universal prolongation of the pair $(\m,\g_0)$} as the largest graded Lie algebra $\g(\m,\g_0)$ satisfying the following conditions:
\begin{enumerate}
\item $\g_{i}(\m,\g_0) = \m_{i}$ for all $i<0$ and $\g_{0}(\m,\g_0) = \g_0$;
\item for any $X\in \g_i(\m,\g_0)$, $i\ge 0$, the equality $[X,\g_{-}(\m)]=0$ implies $X=0$.
\end{enumerate}

Now suppose for simplicity that $\g_0$ is defined as a subalgebra of $\Der_0(\m)$ that stabilizes a family of graded subspaces $\{E_i\}_{i\in I}$ in $\m$. We shall write this as $\g_0=\Stab(\{E_i\}_{i\in I})$. Then the Lie algebra $\g(\m,\g_0)$ can also be interpreted in terms of symmetries of a family of left-invariant vector distributions on a Lie group $M$ with Lie algebra $\m$. Namely, subspaces $E_i$, $i\in I$, extend to left-invariant vector distributions on $M$, and we define a
filtered Lie algebra $\bar \g(\m,\g_0)$ as symmetries of these distributions together with the distributions
$T^{-i}M$ defined above. The proof of Theorem~\ref{thm:yama} stays valid in this generalized case as well: the graded Lie algebra $\g(\m,\g_0)$ coincides with the universal Tanaka prolongation of the pair $(\m,\g_0)$.

We can now formulate the following result. 
\begin{thm}
The Lie algebras $\g^{I}_{k,l}$ and $\g^{II}_{k,l}$ from Section~\ref{sec:sym} coincide with the Tanaka prolongations of the pairs $(\n^{I}_{k,l},\Stab(E))$ and $(\n^{II}_{k,l},\Stab(E))$ where $E=\langle X \rangle$.
\end{thm}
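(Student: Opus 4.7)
The plan is to apply Theorem~\ref{thm:yama} in the generalized form described at the end of Section~\ref{sec:tan}, which identifies the Tanaka prolongation $\g(\m,\g_0)$ with the filtered Lie algebra of germs of infinitesimal symmetries of the associated left-invariant flag and auxiliary distributions on a Lie group with Lie algebra $\m$, in the finite-dimensional case. Propositions~\ref{propI} and~\ref{propII} already exhibit $\g^{I}_{k,l}$ and $\g^{II}_{k,l}$ explicitly and show that they are finite-dimensional, so once the geometric data on both sides are matched the theorem closes the argument.

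First I would identify a neighborhood of a chosen basepoint of $\E$ with a neighborhood of the identity in the simply connected nilpotent Lie group $N$ with Lie algebra $\n$. The bracket relations $[Y_i,D]=Y_{i-1}$, $[Z_j,D]=Z_{j-1}$ of Section~\ref{sec:tan} are precisely the defining relations of $\n$, so after a suitable choice of coordinates the frame $\{D, Y_i, Z_j\}$ becomes a basis of left-invariant vector fields on $N$. Under this identification $E=\langle D\rangle$ corresponds to the graded line $\langle D\rangle \subset \n_{-1}$ fixed by $\Stab(E)$ in either grading.

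Next I would match the geometric flags. For the grading $\n^{II}$, which is fundamental, $\n^{II}_{-1}=\langle D,Y_{k-1},Z_{l-1}\rangle$ is exactly the three-dimensional distribution $D$ of the EDS of the second kind, so the left-invariant filtration generated by it coincides with the flag $E\subset D\subset D+[E,D]\subset\dots$, and preservation of $E$ together with this filtration is by definition the second-kind symmetry condition. For the grading $\n^{I}$ with $k<l$, which is not fundamental, a direct computation from $\deg D=-1$ and $\deg Y_i=\deg Z_i=i-l$ gives $T^{-j}M=E\oplus F_{l-j}$ for $j=1,\dots,l-1$. Preservation of $E$ together with the full filtration $\{T^{-j}M\}$ is therefore equivalent to preservation of $E$ and $F_1$: one direction uses the recursion $F_{i+1}=\{Y\in F_i\mid [Y,E]\subset F_i\}$ from Section~\ref{sec:eds} to pass from $E$ and $F_1$ to all $F_i$; the other uses that $F_1$ is itself a graded subspace whose only non-trivial part in $\n^{I}_{-1}$ is $\langle Z_{l-1}\rangle$, which is automatically stabilized by any degree-zero derivation fixing $\langle D\rangle$, as a short check based on $[Y_1,Z_{l-1}]=0$ shows.

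With both identifications in place, Theorem~\ref{thm:yama} in its generalized form supplies filtered isomorphisms between $\g^{I}_{k,l}$, $\g^{II}_{k,l}$ and the Tanaka prolongations of $(\n^{I}_{k,l},\Stab(E))$ and $(\n^{II}_{k,l},\Stab(E))$. The principal technical obstacle is the non-fundamental first-kind case: one must both invoke the generalized Tanaka-Yamaguchi statement (with the stronger non-degeneracy $[X,\g_-(\m)]=0\Rightarrow X=0$) and verify the equivalence sketched above between the geometric first-kind condition and preservation of the entire Tanaka flag together with $E$.
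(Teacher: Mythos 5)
Your proposal is correct and follows essentially the same route as the paper: the paper's proof is precisely the observation that the EDS's of first and second kind coincide with the left-invariant distributions determined by $\n^{I}_{k,l}$, $\n^{II}_{k,l}$ together with the left-invariant line corresponding to $E$, after which the generalized Tanaka--Yamaguchi theorem applies. You simply spell out the details the paper leaves implicit (the identification $T^{-j}M=E\oplus F_{l-j}$ and the check that $\Stab(E)$ automatically stabilizes $\langle Z_{l-1}\rangle$ in the non-fundamental first-kind case), which is a faithful elaboration rather than a different argument.
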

\begin{proof}
Directly follows from the simple observation that both EDS's of first and second kind associated with the system~\eqref{odekl} coincide with left-invariant distributions defined by Lie algebras $\n^{I}_{k,l}$ (first kind) $\n^{II}_{k,l}$ (second kind) and the transversal left-invariant distribution corresponding to the subspace $E$.
\end{proof}

\begin{ex}
Consider the case $(k,l)=(2,3)$. According to the above theorem, the universal Tanaka prolongation is isomorphic to the Lie algebra $\g^{I}_{2,3}$. Let us describe the corresponding grading of $\g^{I}_{2,3}$. We have:
\begin{align*}
&\deg -3: && \langle \dd{y}, \dd{z}\rangle;\\
&\deg -2: && \langle x\dd{y}, x\dd{z}\rangle;\\
&\deg -1: && \langle x^2\dd{z}, \dd{x}\rangle;\\
&\deg 0: && \langle x\dd{x}, y\dd{y}, z\dd{z}, y\dd{z}\rangle;\\
&\deg 1: && \langle x^2\dd{x}+xy\dd{y}+2xz\dd{z}, xy\dd{z} \rangle;\\
&\deg 2: && \langle y\dd{x} \rangle;\\
&\deg 3: && \langle xy\dd{x}+y^2\dd{y}+2yz\dd{z}, y^2\dd{z} \rangle.
\end{align*}

More generally, for arbitrary $2\le k < l$ the grading of $\g^{I}_{k,l}$ can be determined by setting $\deg x = 1$, $\deg y=\deg z = l$ in formulas of Proposition~\ref{propI}.
\end{ex}

\begin{ex}
The grading of $\g^{II}_{2,3}$ can be determined by setting $\deg x = 1$, $\deg y=\deg z_1=2$, $\deg z = 3$ in Proposition~\ref{propII}:
\begin{align*}
&\deg -3: && \langle \dd{z}\rangle;\\
&\deg -2: && \langle \dd{y}, x\dd{z}+\dd{z_1}\rangle;\\
&\deg -1: && \langle \dd{x}, x\dd{y}, x^2\dd{z}+2x\dd{z_1}\rangle;\\
&\deg 0: && \langle x\dd{x}-z_1\dd{z_1}, z\dd{z}+z_1\dd{z_1}, y\dd{y}, z_1\dd{y}\rangle;\\
&\deg 1: && \langle x^2 \dd{x} +xy\dd{y}+2xz\dd{z}+2z\dd{z_1}, z_1\dd{x}+z_1^2/2\dd{z},  (xz_1-2z)\dd{y} \rangle;\\
&\deg 2: && \langle 2(xz_1-z)\dd{x}+yz_1\dd{y}+xz_1^2\dd{z}+z_1^2\dd{z_1} \rangle;\\
&\deg 3: && \langle (x^2z_1/2 -zx)\dd{x}+(xyz_1/2-yz)\dd{y}+(x^2z_1^2/4-z^2)\dd{z}+(xz_1^2/2-zz_1)\dd{z_1}\rangle.
\end{align*}

Similarly, for all other $(k,l)$ the grading of $\g^{II}_{k,l}$ is defined by $\deg x = 1$, $\deg y=k$, $\deg z_i = l-i$, $i=0,\dots,l-k$.
\end{ex}

\section{Symmetry algebras via Sternberg prolongation} \label{sec:g}
In this section we show how symmetry algebras $\g^{I}_{k,l}$ and $\g^{II}_{k,l}$ can be obtained as Sternberg prolongations of certain subalgebras in $\gl(k+l,\R)$. These subalgebras are exactly the symmetry algebras of homogeneous rational curves in $\Gr(k+l-2,k+l)$ and $\Gr(2,k+l)$ in the cases of symmetries of first and second kind respectively.

Let us recall the notion of Sternberg prolongation of a linear Lie algebra. Let $V$ be an arbitrary finite-dimensional vector space and let $\ag\subset \gl(V)$ be a linear Lie algebra. Denote also by $D(V)$ the Lie algebra of polynomial vector fields on $V$:
\[
D(V) = \sum_{i=0}^{\infty} S^i(V^*)\otimes V.
\]
It is a graded Lie algebra, where $D_k(V)=S^{k+1}(V^*)\otimes V$, so that $D_{-1}(V)=V$ and $D_0(V)$ is identified with $\gl(V)$. By the Sternberg prolongation of $\ag\subset \gl(V)$ we understand a graded  subalgebra $\g$ of $D(V)$ that can be defined via three equivalent ways:
\begin{enumerate}
\item $\g$ is a largest graded subalgebra of $D(V)$ such that $\g_{-1}=D_{-1}(V)=V$ and $\g_0=\ag$;
\item $\g_{-1}=D_{-1}(V)$, $\g_0=\ag$ and $\g_{i+1}=\{u\in D_{i+1}(V)\mid [u,\g_{-1}]\subset \g_i\}$ for all $i\ge 0$;
\item $\g_{i} = S^{i+1}(V^*)\otimes V \cap S^{i}(V^*)\otimes \ag$ for all $i\ge -1$.
\end{enumerate}

Let $V$ be now the $(k+l)$-dimensional vector space with the basis $\{ e_0, e_1,\dots, e_{k-1}, f_0, f_1, \dots, f_{l-1} \}$. Define the nilpotent linear operator $X\colon V\to V$ by:
\begin{align}\label{Xdef}
X(e_i)&=e_{i-1}, \quad i=1,\dots,k-1;\quad X(e_0)=0;\\
X(f_i)&=f_{i-1}, \quad i=1,\dots,l-1;\quad X(f_0)=0 \nonumber.
\end{align}
Denote by $\exp(tX)$ the corresponding one-parameter group in $GL(V)$. 

We shall consider two different gradings of $V$:
\begin{enumerate} 
\item the grading of first kind: $\deg e_i = \deg f_i = i-l$;
\item the grading of second kind: $\deg e_i = i-k$, $\deg f_j = j-l$.
\end{enumerate}
Note that for both gradings the operator $X$ has degree $-1$. But they induce different gradings on the Lie algebra $\gl(V)$. And for both gradings all elements of $V$ are concentrated in negative degree.

Define a curve $\gamma^{I}$ in $\Gr(k+l-2,k+l)$ as the closure of the orbit of the one-parameter subgroup $\exp(tX)$ through the codimension two subspace $V_1=\langle e_1, \dots, e_{k-1}, f_1,\dots, f_{l-1}\rangle$. Similarly, define a curve $\gamma^{II}$ in $\Gr(2,k+l)$ as a closure of the orbit of $\exp(tX)$ through the two-dimensional subspace $V_2=\langle  e_{k-1}, f_{l-1}\rangle$. The curve $\gamma^{II}$ is well-known in projective geometry as a rational normal scroll $S_{k,l}$ (see~\cite{harris}), while $\gamma^{I}$ is its dual curve. Let $A^{I}_{k,l}, A^{II}_{k,l}\subset GL(V)$ be the symmetry groups of curves $\gamma^{I}$ and $\gamma^{II}$ respectively.  Denote by $\ag^{I}_{k,l}$ and $\ag^{II}_{k,l}$ the corresponding subalgebras in $\gl(V)$. 

These symmetry algebras can be easily computed in a purely algebraic way as follows:
\begin{prop}[\cite{dou:kom,dou:05}]
\label{flagprolong}
The subalgebras $\ag^{I}_{k,l}, \ag^{II}_{k,l}$ are the largest graded subalgebras of $\gl(V)$ whose negative part is one-dimensional and is generated by $X$. They can be constructed inductively as:
\begin{align*}
\ag_{-1} &= \langle X \rangle; \\
\ag_{i} & = \{ u\in \gl_{i}(V) \mid [u, X]\subset \ag_{i-1} \},\quad\text{for }i\ge 0,
\end{align*}
where $\ag$ is either $\ag^{I}_{k,l}$ or $\ag^{II}_{k,l}$ and $V$ is equipped with the grading of first or second kind respectively.
\end{prop}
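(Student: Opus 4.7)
The plan is to verify three things: (a) $\ag^I$ and $\ag^{II}$ are graded with negative parts equal to $\R X$; (b) the inductive recipe defines a graded Lie subalgebra $\tilde\ag \subset \gl(V)$; and (c) $\ag^I = \tilde\ag$ (resp.\ $\ag^{II} = \tilde\ag$) under the grading of first (resp.\ second) kind.

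For (a), I would observe that the grading element $H \in \gl(V)$ satisfies $[H, X] = -X$ and preserves $V_1$ (resp.\ $V_2$), since these subspaces are graded in the relevant grading of $V$. Hence $\exp(sH)$ fixes $V_1$ (resp.\ $V_2$) while rescaling $\exp(tX)$ to $\exp(e^{-s}tX)$, so $H \in \ag^I$ (resp.\ $H \in \ag^{II}$), making both algebras $\ad H$-stable and therefore graded. For the negative part, the tangent line to $\gamma^I$ at $V_1$ inside $\Hom(V_1, V/V_1) \cong T_{V_1}\Gr$ is spanned by the class of $X|_{V_1}$, and the condition that $A \in \gl(V)$ preserve the curve translates to $(\ad X)^n(A)|_{V_1} \equiv c_n\, X|_{V_1} \pmod{V_1}$ for all $n \ge 0$ and suitable $c_n \in \R$. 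A degree check against the grading of $V/V_1$ then forces any negative-degree $A$ to be a scalar multiple of $X$, and the same argument handles $\gamma^{II}$.

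For (b), the set $\tilde\ag$ is a graded subspace by construction, and closure under the bracket follows by induction on the total degree: $[[u,v], X] = [u, [v,X]] - [v, [u,X]]$ by Jacobi, and both terms lie in $\tilde\ag_{i+j-1}$ by induction on $i+j$, so $[u,v] \in \tilde\ag_{i+j}$. For one inclusion in (c), part (a) gives that $\ag^I$ is graded with negative part $\R X$, so for $u \in \ag^I_i$ with $i \ge 0$, bracket closure yields $[u, X] \in \ag^I_{i-1} \subset \tilde\ag_{i-1}$, and induction gives $\ag^I_i \subset \tilde\ag_i$.

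For the reverse inclusion $\tilde\ag \subset \ag^I$, I would observe that $\tilde\ag_{\ge 0}$ stabilizes $V_1$: degree-zero elements preserve each graded component, and a degree-$i$ element with $i \ge 1$ sends the degree-$d$ component into degree $d+i$, which lies in $V_1$ whenever $d \ge -l$ (since the only graded piece outside $V_1$ sits in degree $-l$). Hence the connected subgroup $\tilde G \subset GL(V)$ with Lie algebra $\tilde\ag$ has an orbit of $V_1$ of dimension $\dim\tilde\ag - \dim\tilde\ag_{\ge 0} = 1$. This orbit contains the one-dimensional orbit $\exp(\R X) \cdot V_1$, hence coincides with it, and its Zariski closure is $\gamma^I$. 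Therefore $\tilde G$ preserves $\gamma^I$ and $\tilde\ag \subset \ag^I$; the analogous argument with $V_2$ in place of $V_1$ handles $\ag^{II}$. The main obstacle will be the degree bookkeeping in (a) that rules out spurious negative-degree symmetries; the remaining steps are essentially formal once the negative part is pinned down.
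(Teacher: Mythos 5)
First, a point of reference: the paper does not prove Proposition~\ref{flagprolong} at all --- it is imported from \cite{dou:kom,dou:05} --- so there is no internal argument to compare yours against. Judged on its own terms, your architecture is right. Part (b) and both inclusions in (c) are the standard formal arguments; in particular the induction you use for $\ag^{I}_i\subset\tilde{\ag}_i$ is exactly the maximality argument that shows the inductively defined algebra is the \emph{largest} graded subalgebra with negative part $\langle X\rangle$ (a claim of the proposition you never state separately, but which your argument in fact delivers). The translation of curve-invariance into the conditions $(\ad X)^n(A)|_{V_1}\equiv c_n\,X|_{V_1}\pmod{V_1}$ for all $n\ge 0$ is also the correct infinitesimal formulation.

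The step you yourself flag as the obstacle is, however, more than ``a degree check,'' and as written it is a genuine gap. Comparing degrees in $\Hom(V_1,V/V_1)$ only forces $c_n=0$ whenever $\deg A+ n\neq -1$ and thereby reduces the condition to $(\ad X)^n(A)V_1\subset V_1$; it does not by itself confine $A$ to $\R X$. Already for $n=0$ and $\deg A=-1$ the space of maps with $(A-c_0X)V_1\subset V_1$ and $(A-c_0X)|_{V_1}\equiv 0\pmod{V_1}$ is positive-dimensional (for $(k,l)=(2,3)$ it is two-dimensional: $f_2\mapsto ae_1+bf_1$ is unconstrained at this stage). What closes the argument is an induction over the graded components of $V$ that uses \emph{all} the conditions $n\ge 0$ simultaneously: writing $V^{(j)}$ for the degree-$j$ component, once $A$ is known to vanish on $V^{(i)}$ for $i<j$, the condition with $n=j-\deg A+\ldots$ chosen so that the image lands in $V^{(-l)}$ collapses to $X^nAv=0$ for $v\in V^{(j)}$, and one must invoke the injectivity of $X^n\colon V^{(j+\deg A)}\to V^{(-l)}$ (dually, for $\gamma^{II}$, that the subspaces $X^mV_2$ exhaust $V$) to conclude $Av=0$. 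This is where the specific two-block Jordan structure of $X$ enters, and it is the only nontrivial computation in the whole proposition. A second, smaller imprecision: ``the orbit contains $\exp(\R X)\cdot V_1$, hence coincides with it'' is not literally true (the smaller orbit is merely open in the larger one --- compare the rational normal curve, where the full symmetry group acts transitively on the closed curve). The correct conclusion, which your tangency data already provides, is that the fundamental vector field of every element of $\tilde{\ag}$ is tangent to the open orbit at each of its points, hence tangent to its Zariski closure $\gamma^{I}$, so its flow preserves $\gamma^{I}$.
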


In fact, symmetry algebras of rational normal scrolls are well-known. If $k<l$, then
\[
\ag^{II}_{k,l} = \left\{\left. 
\begin{pmatrix} \rho_k(A)+ e_1 E_{k} & B \\ 
0 & \rho_l(A) +  e_2 E_{l} \end{pmatrix} 
\,\right|\, \begin{matrix} A\in \sll(2,\R), e_1,e_2\in\R, \\ B \subset V(l-k)  \end{matrix}
\right\},
\]
where $\rho_r\colon \sll(2,\R)\to \gl(r,\R)$ is an irreducible $r$-dimensional representation of $\sll(2,\R)$, and $V(l-k)$ is an irreducible component of dimension $l-k+1$ in the decomposition of the tensor product of $\rho_k$ and $\rho_l$.

For example, in the simplest case of $(k,l)=(2,3)$ we have:
\[
\ag^{II}_{2,3}=
\left\{
\begin{pmatrix}
a+e_1 & c & p &  q & 0 \\
b  & -a+e_1 & 0 & p & q \\
0 & 0 & 2a+e_2 & 2c & 0\\
0 & 0 & b & e_2 & c \\
0 & 0 & 0 & 2b & -2a + e_2
\end{pmatrix}\right\}.
\]

If $k=l$, then 
\[
\ag^{II}_{k,k} = \left\{\left. 
\begin{pmatrix} \rho_k(A)+c_{11} E_{k} & c_{12} E_{k} \\ 
c_{21} E_{k} & \rho_k(A) + c_{22} E_{k} \end{pmatrix} 
\,\right|\, \begin{matrix} A\in \sll(2,\R), \\ c_{11},c_{12},c_{21},c_{22}\in\R \end{matrix}
\right\}.
\]

As $\gamma^{II}$ is dual to $\gamma^{I}$, the subalgebra $\ag^{I}_{k,l}$ is conjugate to the transposed of $\ag^{II}_{k,l}$. 

Now we can formulate the main result of the paper.
\begin{thm} Let $\ag^{I}_{k,l},\ag^{II}_{k,l}\subset \gl(V)$ be symmetry algebras of the curves $\gamma^{I}$ and $\gamma^{II}$ respectively. The Sternberg prolongations of $\ag^{I}_{k,l},\ag^{II}_{k,l}$ are finite dimensional and coincide with Lie algebras $\g^{I}_{k,l}$ and $\g^{II}_{k,l}$.
\end{thm}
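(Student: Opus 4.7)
The strategy is to leverage the Tanaka prolongation identification just established, and to show that the Sternberg prolongation of $\ag^\star_{k,l}$ agrees as an abstract Lie algebra with the Tanaka prolongation of $(\n^\star_{k,l}, \Stab(E))$, where $\star \in \{I, II\}$ and $E = \langle D \rangle$. Since the latter is finite-dimensional and equals $\g^\star_{k,l}$ by the preceding theorem, this will also yield the finite-dimensionality asserted in the statement.

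The first step is a structural identification of the negative parts. As graded nilpotent Lie algebras, $\n^\star_{k,l}$ is isomorphic to $V \rtt \langle X \rangle$, with $V$ viewed as an abelian ideal, $X$ acting on $V$ as the nilpotent endomorphism of~\eqref{Xdef}, and the grading inherited from the first-kind (respectively, second-kind) grading on $V$ together with $\deg X = -1$. The explicit map $D \mapsto X$, $Y_i \mapsto e_i$, $Z_j \mapsto f_j$ realizes this isomorphism, matching both brackets and degrees. Next, by Proposition~\ref{flagprolong}, the degree-zero component $\ag^\star_0$ in the $V$-grading consists precisely of graded endomorphisms of $V$ whose bracket with $X$ lies in $\langle X \rangle$; under the identification above, these are exactly the degree-zero derivations of $\n^\star_{k,l}$ stabilizing $E = \langle X \rangle$, and so the degree-zero parts of the Sternberg and Tanaka prolongations coincide.

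The heart of the argument is to equip the Sternberg prolongation $\g^{St}(\ag^\star)$ with the $V$-grading (rather than its usual polynomial grading) and verify that it satisfies the universal property of the Tanaka prolongation of $(\n^\star_{k,l}, \Stab(E))$. In the $V$-grading, the negative part consists of $V$ (distributed across $V$-degrees $-l, \dots, -1$) together with $\ag^\star_{-1} = \langle X \rangle$, which recovers $\n^\star_{k,l}$. For positive graded components, I would show that the inductive Sternberg condition $[u, V] \subset \g^{St}_{<i}$, combined with the automatic bracket $[u, X] \in \g^{St}_{<i}$ (which holds because $X \in \g^{St}_0$), is equivalent to the Tanaka condition $[u, \n^\star_{k,l}] \subset \g^{St}_{<i}$; maximality in both directions then forces the two prolongations to coincide. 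The reverse embedding of Tanaka into polynomial vector fields on $V$ is obtained canonically by iterated bracketing against the $V$-directions.

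The main obstacle lies in the non-fundamentality of $\n^I_{k,l}$ when $k < l$, since there $\n^I_{-1}$ fails to generate $\n^I_{k,l}$. One must then invoke the generalized Tanaka prolongation discussed after Theorem~\ref{thm:yama}, in which bracketing is required against all of $\g_-$ rather than just $\g_{-1}$. Reconciling this with the Sternberg condition (which only involves brackets against $V$) requires an inductive argument on $V$-degree, together with a careful verification that no spurious elements appear in the $V$-degree zero component coming from higher Sternberg-degree pieces, so that the zero-part does not strictly exceed $\ag^\star_0$.
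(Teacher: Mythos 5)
Your overall strategy is the same as the paper's: the paper reduces the theorem to a general proposition about a graded nilpotent $\m=E\oplus V$ with $V$ a commutative ideal stable under $\Der_0(\m)$ and $E$ acting faithfully, re-grades the Sternberg prolongation of $\ag\subset\gl(V)$ by the grading of $V$, and proves a two-way embedding against the generalized Tanaka prolongation of $(\m,\Stab(E))$. Your forward direction (Sternberg $\hookrightarrow$ Tanaka) matches the paper's in outline, including the two subtleties you flag: that the negative part of the re-graded Sternberg prolongation collapses to $\ad E\oplus V\cong\m$, and that the $V$-degree-zero component must not exceed $\ag_0$ (the paper rules out a degree-zero derivation sending $V$ to $E$ precisely by the $\Der_0$-stability of $V$).

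The genuine gap is in your reverse direction. ``Maximality in both directions then forces the two prolongations to coincide'' is not enough: maximality of the Tanaka prolongation tells you it \emph{contains} anything satisfying its axioms, but to conclude it is \emph{contained in} the Sternberg prolongation you must actually realize $\g(\m,\Stab(E))$ as polynomial vector fields on $V$. The canonical map by iterated bracketing against $V$ that you invoke is an embedding only if no non-zero element of the complementary subalgebra $E+\sum_{i\ge0}\g_i(\m,\Stab(E))$ centralizes $V$ — an effectiveness statement that does not follow from the Tanaka axioms alone, since condition (2) only forbids centralizing all of $\g_-=E\oplus V$. This is exactly where the faithfulness of the $E$-action on $V$, which your proposal never uses, must enter: the paper shows that if $u\neq0$ centralizes $V$, then the centralizer $Z(V)$ is $\ad E$-stable, repeated bracketing with $E$ produces a non-zero $\bar u\in Z(V)\cap\g_0$ with $[\bar u,e]\neq0$ for some $e\in E$, and then the Jacobi identity $[[\bar u,e],v]=0$ for all $v\in V$ contradicts faithfulness. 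A secondary point you should also address in that direction is why the polynomial-degree-zero part of the resulting image in $\gl(V)$ lands inside $\ag$ (rather than merely inside $\gl(V)$); this again uses the maximality characterization of $\ag$ from Proposition~\ref{flagprolong}. With those two pieces supplied, your argument closes and agrees with the paper's.
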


The proof of this theorem is a direct corollary of the following technical result.
\begin{prop}
Let $\m$ be a graded nilpotent Lie algebra, $V$ a graded commutative ideal stable with respect to $\Der_0(\m)$, and let $E\subset \m$ be a commutative subalgebra in $\m_{-1}$ such that $\m=E\oplus V$. 

Assume that the action of $E$ on $V$ is faithful and denote by $\ad E$ the corresponding subalgebra in $\gl(V)$, concentrated in degree $-1$. Define $\ag$ as a largest graded subalgebra of $\gl(V)$ such that $\ag_{-}=\ad E$. Then the Tanaka prolongation of the pair $(\m, \Stab(E))$ coincides with the Sternberg prolongation of the subalgebra $\ag\subset \gl(V)$.
\end{prop}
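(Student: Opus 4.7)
The plan is to identify both prolongations with the same graded Lie algebra by matching the underlying data in nonpositive degrees and invoking the universal (maximality) property in higher degrees.

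First, I set up the correspondence in negative degrees. The faithful map $\ad\colon E\to \gl(V)$ identifies $E$ with $\ag_{-1}=\ad E$. Combining $\ad$ with the identity on $V$ gives a graded embedding $\m=E\oplus V\hookrightarrow V\oplus \ag\subset D(V)$, where $V\subset D(V)$ consists of constant vector fields and $\gl(V)\subset D(V)$ of linear ones. Equipping $D(V)$ with the geometric grading inherited from the grading of $V$, the negative pieces on the Sternberg side---namely $V_i$ for $i\le -2$ and $V_{-1}\oplus \ad E$ in degree $-1$---match $\m_i$ for $i<0$. A direct computation with the bracket $[v,u]=u(v)$ in $D(V)$ (for constant $v\in V$ and linear $u\in \gl(V)$) verifies that this is a graded Lie algebra homomorphism, absorbing a sign into the identification $E\simeq \ad E$.

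Second, I match the degree-zero pieces via $\phi\mapsto \phi|_V$. For $\phi\in \Stab(E)$, the derivation identity applied to $[e,v]$ with $e\in E$, $v\in V$ yields $[\phi|_V,\ad e]=\ad(\phi|_E(e))\in \ad E$, so $\phi|_V\in \gl(V)_0$ normalizes $\ad E$; by the inductive characterization of $\ag$ in Proposition~\ref{flagprolong} this places $\phi|_V$ in $\ag_0$. Conversely, any $u\in \ag_0$ satisfies $[u,\ad E]\subset \ad E$, so by faithfulness of $\ad$ we may define $\phi|_E$ via $\ad(\phi|_E(e)):=[u,\ad e]$; since $V$ is an abelian ideal and $E$ is commutative, $(\phi|_E,u)$ assembles into a degree-zero derivation of $\m$ preserving $E$.

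Third, I extend the identification inductively to positive degrees using the shared universal property. An element of the Tanaka prolongation $\g_k(\m,\Stab(E))$, $k\ge 1$, is a degree-$k$ derivation of $\m$ into the previously constructed piece; since $V$ is abelian and $E$ is commutative, it is determined by its restrictions to $E$ and $V$, coupled by compatibility with $\ad E$. Under the geometric grading of $D(V)$ this matches the description of the $k$-th piece of the Sternberg prolongation of $\ag$: a polynomial vector field of geometric degree $k$ whose bracket with $V\subset D(V)$ lies in the previous piece and whose bracket with $\ad E\subset \gl(V)\subset D(V)$ lies in $\ag_{k-1}$. The inductive step is then a direct translation between these two characterizations, and the brackets agree because both prolongations are characterized by the same maximality property.

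The main obstacle is the bookkeeping of the two gradings (algebraic on $D(V)$ versus geometric from $V$) and the correct matching of the splittings $\m_{-1}=E\oplus V_{-1}$ and $V_{-1}\oplus \ad E$ on the two sides, where on the Sternberg side the two summands live in distinct subspaces $V,\gl(V)\subset D(V)$. Verifying that the bracket $[\ad e,v]$ computed in $D(V)$ reproduces $[e,v]=\ad e(v)$ in $\m$ with consistent signs is the central check. The fact that $\m$ need not be fundamental is absorbed by the generalized form of Tanaka's theorem noted after Theorem~\ref{thm:yama}.
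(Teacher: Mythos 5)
Your overall strategy --- identify the negative and zero graded pieces on both sides and then propagate the identification to positive degrees by maximality --- is the same skeleton as the paper's argument, but the three places where the hypotheses of the proposition actually do work are precisely the places you pass over, and each is a genuine gap. (i) In negative degrees you construct an embedding $\m=E\oplus V\hookrightarrow V\oplus\ag\subset D(V)$, but you never show that the Sternberg prolongation, once regraded by the geometric grading of $V$, contains \emph{nothing else} in negative degree: a priori an element of $S^k(V^*)\otimes V$ with $k\ge 2$ could have negative geometric degree. The paper excludes this by bracketing such an element with $k-1$ homogeneous vectors of $V$ to land in a nonzero element of $\ag$ of degree $\ge -1$; since the $v_i$ all have negative degree, the original element must have degree $\ge 0$. (ii) In degree zero you match $\Stab(E)$ with $\ag_0$, but you do not rule out geometric-degree-zero elements of the Sternberg prolongation sitting in $S^2(V^*)\otimes V$. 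Eliminating these is exactly where the hypothesis that $V$ is stable under $\Der_0(\m)$ enters: such an element would bracket $V$ into $\ad E$ and hence define a degree-preserving derivation of $\m$ carrying $V$ into $E$, contradicting stability. Your write-up never invokes this hypothesis.

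(iii) The inductive step in positive degrees is asserted rather than proved. The statement that a Tanaka element is ``determined by its restrictions to $E$ and $V$, coupled by compatibility with $\ad E$'' conceals the crucial point: the restriction-to-$V$ map must be \emph{injective} on $\sum_{i\ge 0}\g_i(\m,\Stab(E))\oplus E$, i.e.\ the centralizer of $V$ there must vanish; otherwise elements acting trivially on $V$ but nontrivially on $E$ would have no counterpart on the Sternberg side. This is where faithfulness of the $E$-action on $V$ is needed, and the argument is not formal: the paper shows the centralizer $Z(V)$ is $\ad E$-stable, brackets a putative nonzero element down to a nonzero $\bar u\in Z(V)\cap\g_0$ using Tanaka's nondegeneracy condition, and then derives $[[\bar u,e],v]=[\bar u,[e,v]]-[e,[\bar u,v]]=0$ for all $v\in V$, contradicting faithfulness since $[\bar u,e]\in E$ is nonzero. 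Until you supply arguments for (i)--(iii), the ``direct translation'' in your final step is not established to be a bijection, and the proof is incomplete.
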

\begin{proof}
Let $\g$ be the Sternberg prolongation of the subalgebra $\ag\subset\gl(V)$. We equip it with grading 
inherited from the grading of $V\subset\m$. Let us show that $\g$ satisfies conditions (1)-(2) of the Tanaka prolongation of the pair $(\m,\Stab(E))$ and hence is naturally embedded into $\g(\m,\Stab(E))$. 

It is clear that both $V$ and $\ad E \subset \ag$ are negatively graded and lie in $\g$. Suppose $u$ is a homogeneous element of $\g$ of negative degree. If it lies in $S^1(V^*)\otimes V = \gl(V)$, then by  construction of $\ag$ it is contained in $\ad E$. If $u\in S^k(V^*)\otimes V$ with $k\ge 2$, then we can find homogeneous elements $v_1,\dots,v_{k-1}\in V$ such that $\bar u=[\dots [u,v_1],\dots, v_{k-1}]$ is a non-zero element of $\ag$. As all $v_1,\dots,v_{k-1}\in V$ are negatively graded and $\deg \bar u\ge -1$, we see that $\deg u \ge 0$. Thus, negative part of $\g$ coincides with $(\ad E) \oplus V$ and is naturally isomorphic to $\m$.

Let $u\in\g$ be an arbitrary element of degree $0$. Let us show that it lies in $\ag_0$ and thus preserves $\ad E$. Another option would be that $u\in S^2(V^*)\otimes V$ and $[u,v]\in\ag$ for all $v\in V$. As $[u,v]$ is necessarily negatively graded, this would imply that $[u,v]\in\ad E$. Thus, $\ad u$ would define a non-zero degree preserving derivation of $\m$ that takes $V$ to $E$. But this contradicts the assumption that $V$ is stable with respect to $\Der_0(\m)$. This proves that $\g_0$ stabilizes $E$.

Now let $u\in\g$ be an arbitrary non-zero element of non-negative degree. Then by the property of Sternberg prolongation we have $[u,V]\ne 0$. This completes the proof that $\g$ is naturally embedded into the Tanaka prolongation of $(\m,\Stab(E))$.

Let us now prove that $\g(\m,\Stab(E))$ is embedded into the Sternberg prolongation of the subalgebra $\ag\subset \gl(V)$. As $V$ naturally lies in $\g(\m,\Stab(E))$ as a commutative subalgebra, we just need to prove that there is a complementary subalgebra $\g_0$ to $V$ such that each non-zero element in it has a non-zero bracket with $V$. We define $\g_0$ as $E+\sum_{i\ge 0} \g_i(\m,\Stab(E))$. Indeed, by definition of $\g(\m,\Stab(E))$ we have $[E,\g_0(\m,\Stab(E))]\subset E$ and $[E,\g_i(\m,\Stab(E))]\subset \g_{i-1}(\m,\Stab(E))$ for $i>0$. 

Next, let $u$ be an arbitrary non-zero element inside $\g_0$. Suppose $[u,V]=0$. This means that $u$ lies in the centralizer $Z(V)$ of $V$ in $\g(\m,\Stab(E))$. But as $[E,V]\subset V$, the centralizer $Z(V)$ is also stable with respect to the adjoint action of $E$. Hence, by the property (2) of Tanaka prolongation taking sufficiently many brackets of $u$ with elements from $E$, we get a non-zero element $\bar u$ from $Z_0(V)=Z(V)\cap \g_0(\m,\Stab(E))$. Moreover, this element acts trivially on $V$, but non-trivially on $E$. Thus, there is an element $e\in E$ such that $[\bar u, e]\ne 0$. But then for any element $v\in V$ we have:
\[
[[\bar u, e], v] = [\bar u, [e, v]] - [e, [\bar u, v]] =0
\]
This contradicts to the assumption that the action of $E$ on $V$ is faithful. 

So, for any non-zero element $u\in \g_0$ we have $[u,V]\ne 0$. This proves that $\g(\m,\Stab(E))$ coincides with the Sternberg prolongation of $\ag\subset \gl(V)$.
\end{proof}

\begin{ex}
According to the above theorem the Sternberg prolongation of the Lie algebra $\ag^{I}_{2,3}$ is equal to $\g^{I}_{2,3}$ and is isomorphic to $\gl(3,\R)\rtt S^2(\R^3)$. Let us describe the corresponding grading of $\g^{I}_{2,3}$:
\begin{align*}
&\deg -1: && \langle \dd{y}, \dd{z}, x\dd{y}, x\dd{z}, x^2\dd{z}\rangle;\\
&\deg 0: && \langle \dd{x}, x\dd{x}, x^2\dd{x}+xy\dd{y}+2xz\dd{z}, y\dd{y}, z\dd{z}, y\dd{z}, xy\dd{z}\rangle;\\
&\deg 1: && \langle y\dd{x}, y^2\dd{z}, xy\dd{x}+y^2\dd{y}+2yz\dd{z} \rangle.\\
\end{align*}
As expected, the degree $-1$ component is a commutative subalgebra.  

In general, in case of arbitrary $2\le k < l$ the grading of $\g^I_{k,l}$ according to Sternberg prolongation can be determined by setting $\deg x = 0$, $\deg y=\deg z = 1$ in formulas of Proposition~\ref{propI}. Note that the number of non-zero prolongations of $\ag^{I}_{k_l}$ can be arbitrarily high. For example, if $k=2$, then the $(l-2)$-nd prolongation is still non-zero, while $(l-1)$-st one already vanishes.
\end{ex}

\begin{ex}
The grading of $\g^{II}_{2,3}$ viewed as the Sternberg prolongation of $\ag^{II}_{2,3}$ can be determined by setting $\deg x = 0$, $\deg y=\deg z = \deg z_1=1$ in Proposition~\ref{propII}.

For $(k,l)\ne (2,3)$ the grading of $\g^{II}_{k,l}$ is defined by $\deg x = 0$, $\deg y=\deg z = 1$.
\end{ex}

\section{Non-linear mixed order equations}
\label{sec:nlin}
Non-linear mixed-order equations can be treated via the notion of $G$-structures on filtered manifolds introduced by N.~Tanaka. Consider a system:
\begin{equation}\label{nlin:kl}
\begin{aligned}
y^{(k)} &= f(x,y,y',\dots,y^{(k-1)},z,z',\dots,z^{(l-1)}),\\
z^{(l)} &= g(x,y,y',\dots,y^{(k-1)},z,z',\dots,z^{(l-1)}),
\end{aligned}
\end{equation}
where as above $2\le k<l$. It can be viewed as a codimension 2 submanifold $\E$ of the mixed jet space $J^{k,l}(\R,\R^2)$, which is transversal to the fibers of the projection $\pi\colon J^{k,l}(\R,\R^2) \to J^{k-1,l-1}(\R,\R^2)$. 

The canonical contact system on $J^{k,l}(\R,\R^2)$ restricted to the equation manifold $\E$ is given by the following 1-forms:
\begin{align*}
& dy_i - y_{i+1}dx, i=0,\dots k-2, \quad dz_j - z_{j+1}dx, j=0,\dots,l-2;\\
& dy_{k-1} - f(x,y_0,y_1,\dots,y_{k-1},z_0,z_1,\dots,z_{l-1}) dx;\\
& dz_{l-1} - g(x,y_0,y_1,\dots,y_{k-1},z_0,z_1,\dots,z_{l-1}) dx.\\
\end{align*}
It defines a one-dimensional vector distribution $E$, whose integral curves are lifts of solutions of the given system to the jet space $J^{k,l}(\R,\R^2)$.

As in the case of two different notions of symmetries, there are also two ways to introduce another vector distribution (or even a flag of distributions) transversal to $E$. Namely, the first way is to define a transversal foliation $F$ as the kernel of the projection $\pi_1\colon \E \to J^{0,0}(\R,\R)=\R^3$. The second way is to define $F$ as the kernel of the projection $\pi_2\colon \E\to J^{k-2,l-2}(\R,\R^2)$. 

Let us first consider the second case in more detail.  As in case of trivial ODEs, let $F$ be the 2-dimensional completely integrable distribution tangent to the fibers of the projection $\pi_2\colon \E\to J^{k-2,l-2}(\R,\R^2)$. Next, we define $D=E\oplus F$, which can also be defined as a pull-back of the standard contact system on $J^{k-1,l-1}(\R,\R^2)$. In particular, the weak derived flag of $D$ defines the filtration of the tangent bundle $T\E$ and turns $\E$ into a filtered manifold of type $\n^{II}_{k,l}$. The decomposition $D=E\oplus F$ is known as so-called \emph{pseudo-product structure} and introduced and first studied by N.~Tanaka~\cite{tan3}. As in case of trivial ODEs, we shall call this pseudo-product structure \emph{the EDS of second kind associated with a non-linear system of ODEs of mixed order}.

Its algebraic prolongation is already computed in Section~\ref{sec:tan} and is equal to $\g^{II}_{k,l}$. In particular, using the results of Tanaka~\cite{tan1} (see also Zelenko~\cite{zel:tan}) we immediately arrive at the following result.
\begin{prop}\label{propkind2}
For any EDS of second kind associated with system~\eqref{nlin:kl} there exists a natural frame bundle $P\to \E$ and an absolute parallelism structure $\omega\colon TP \to \g^{II}_{k,l}$.
\end{prop}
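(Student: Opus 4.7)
The plan is to realize the EDS of second kind as a $\Stab(E)$-structure on a filtered manifold of constant symbol type $\n^{II}_{k,l}$ and then invoke Tanaka's prolongation theorem in the form developed in Section~\ref{sec:tan}. The argument is essentially a geometric repackaging: once the filtered manifold and its reduction are in place, the existence of $P$ and $\omega$ is automatic from the general theory, using that the Tanaka prolongation of $(\n^{II}_{k,l},\Stab(E))$ is finite-dimensional and equals $\g^{II}_{k,l}$.

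First I would verify that the weak derived flag of $D$, defined iteratively by $D^{(i+1)}=D^{(i)}+[D,D^{(i)}]$ with $D^{(1)}=D$, endows $\E$ with the structure of a filtered manifold whose associated graded Lie algebra $\operatorname{gr}(T_p\E)$ at every point $p$ is isomorphic to $\n^{II}_{k,l}$. In local coordinates on $\E$ this is a direct computation: the generators $X,Y,Z$ spanning $D$ in the non-linear case differ from the trivial ones only by the addition of $f\dd{y_{k-1}}+g\dd{z_{l-1}}$ to the vector $X$, and these extra terms lie in $D$ itself. Consequently the iterated brackets, taken modulo lower filtration components, reproduce the bracket relations $[Y_i,X]\equiv Y_{i-1}$, $[Z_j,X]\equiv Z_{j-1}$ that define $\n^{II}_{k,l}$, so the symbol type is constant and equal to $\n^{II}_{k,l}$.

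Next, the line distribution $E\subset D$ selects, at each $p\in\E$, a distinguished one-dimensional subspace of $\operatorname{gr}_{-1}(T_p\E)\cong\n^{II}_{-1}$. The set of graded symbol isomorphisms sending this line to the fixed $E\subset\n^{II}_{-1}$ forms a principal bundle over $\E$ whose structure group is the stabilizer $\Stab(E)\subset\Der_0(\n^{II}_{k,l})$. Thus the pseudo-product structure $(E\subset D)$ is precisely a $\Stab(E)$-structure on the filtered manifold $\E$. Since $\n^{II}_{k,l}$ is generated by $\n^{II}_{-1}=\langle D,Y_{k-1},Z_{l-1}\rangle$, it is fundamental and the classical form of Tanaka's prolongation theorem applies: one obtains a tower of higher-order frame bundles whose top stage $P\to\E$ carries a canonical absolute parallelism with values in the universal prolongation $\g(\n^{II}_{k,l},\Stab(E))$. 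By the theorem of Section~\ref{sec:tan} this prolongation coincides with $\g^{II}_{k,l}$, and Proposition~\ref{propII} shows explicitly that it is finite-dimensional, so the tower terminates after finitely many steps and yields the required $\omega\colon TP\to\g^{II}_{k,l}$.

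The main point of substance is the constancy of the symbol type, since without it Tanaka's theorem cannot be invoked uniformly on $\E$. Here the pseudo-product decomposition $D=E\oplus F$ rigidifies the situation: $F$ is the unique 2-dimensional integrable subdistribution of $D$ regardless of the nonlinearity, and the nonlinear terms $f,g$ only modify the generator $X$ of $E$ within $D$ and cannot alter the structure constants modulo lower degree. All remaining ingredients (smoothness of the $\Stab(E)$-reduction, functoriality of Tanaka's construction with respect to a $G_0$-reduction) are standard.
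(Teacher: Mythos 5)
Your proposal is correct and follows essentially the same route as the paper: identify the pseudo-product structure $D=E\oplus F$ as a $\Stab(E)$-reduction on the filtered manifold of constant type $\n^{II}_{k,l}$, then apply Tanaka's prolongation theorem together with the identification of the prolongation of $(\n^{II}_{k,l},\Stab(E))$ with $\g^{II}_{k,l}$ from Section~\ref{sec:tan}. Your explicit check that the nonlinear terms $f\dd{y_{k-1}}+g\dd{z_{l-1}}$ lie in $D$ and hence do not alter the symbol is a detail the paper leaves implicit, but the argument is the same.
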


Let us now consider the EDS interpretation of~\eqref{nlin:kl} that contains the fibers of the projection $\pi_1\colon \E \to J^{0,0}(\R,\R)=\R^3$ as a part of its data. As in case of trivial systems, define $F_1$ as a distribution tangent to the fibers of the projection $\pi_1$:
\begin{equation}\label{F1def}
F_1 = \left \langle \dd{y_i}, i=1,\dots,k-1; \dd{z_j},j=1,\dots,l-1\right\rangle.
\end{equation}
Further, define a sequence of distributions:
\begin{equation}\label{Fidef}
F_{i+1} = \{ Y \in F_i \mid [Y, E]\subset F_i \}.
\end{equation}
We call a pair of distributions $(E,F_1)$ \emph{the EDS of first kind associated with a non-linear pair of ODEs of mixed order}. However, in general, the dimensions of these distributions in the non-trivial case may differ from the dimensions of these distributions for the trivial system of equations. To see this explicitly, consider the first non-trivial case of $(k,l)=(2,4)$ when this can be observed explicitly. In this case the vector distribution $E$ is spanned by:
\[
X = \dd{x} + y_1\dd{y_0} + f\dd{y_2} + z_1\dd{z_0} + z_2\dd{z_1} + z_3\dd{z_2} + g\dd{z_3},
\]
and $F_1$ is spanned by vector fields $\dd{y_1}$ and $\dd{z_i}$, $i=1,2,3$. Simple computation shows that, as expected, $F_2$ is spanned by $\dd{z_i}$, $i=2,3$. However, for $F_3$ we have already the branching:
\begin{align*}
F_3 &= 0,\qquad&\text{if } \frac{\partial f}{\partial z_3} \ne 0;\\
F_3 &= \left\langle \dd{z_3} \right\rangle, &\text{otherwise}.
\end{align*}

This example can be easily extended to arbitrary $k<l$. A straightforward generalization of Tanaka prolongation procedure~\cite{tan3,zel:tan} to filtered structures with constant non-fundamental symbol gives:
\begin{prop}\label{propkind1}
Let $\E$ be a non-linear pair of ODEs~\eqref{nlin:kl} such that $\frac{\partial f}{\partial z_i} = 0$, $i=k+1,\dots,l-1$. Then there exists a frame bundle $P\to \E$ and an absolute parallelism structure $\omega\colon TP \to \g^{I}_{k,l}$ naturally associated with the EDS of first kind for system~\eqref{nlin:kl}.
\end{prop}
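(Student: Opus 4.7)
The plan is to verify that under the hypothesis $\frac{\partial f}{\partial z_i}=0$ for $i=k+1,\dots,l-1$, the EDS of first kind $(E,F_1)$ defines a filtered manifold of constant type, with symbol isomorphic to $\n^{I}_{k,l}$ equipped with the additional datum of the line $E=\langle D\rangle$ inside $\n^{I}_{-1}$, and then invoke the (generalized) Tanaka construction to produce the desired frame bundle with absolute parallelism.

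First I would compute the derived flag $F_{i+1}=\{Y\in F_i\mid [Y,E]\subset F_i\}$ explicitly in the non-linear case and check that the hypothesis on $f$ is exactly what is needed to keep the dimensions of $F_i$, as well as those of the successive sums $E+F_i+[E,F_i]+\dots$, constant along $\E$, matching the dimensions in the trivial model. The point is that the branching behavior observed in the $(k,l)=(2,4)$ example is caused precisely by the transverse derivatives $\partial f/\partial z_i$ for $i>k$; ruling these out makes the iterative bracket construction produce the same flag as for the trivial system at every point. Consequently the associated graded Lie algebra of the filtration $E\subset E+F_{l-1}\subset\dots\subset E+F_1\subset T\E$ at each point is canonically isomorphic to $\n^{I}_{k,l}$ with the distinguished line $E$, i.e.~$\E$ becomes a filtered manifold of constant symbol type $(\n^{I}_{k,l},\langle D\rangle)$.

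Next I would invoke the generalization of Tanaka's equivalence method~\cite{tan3,zel:tan} that associates to every filtered manifold of constant type $(\m,\g_0)$, with $\g_0\subset\Der_0(\m)$, a canonical frame bundle $P\to \E$ carrying an absolute parallelism $\omega\colon TP\to \g(\m,\g_0)$, provided the total prolongation is finite-dimensional. Here $\m=\n^{I}_{k,l}$ and $\g_0=\Stab(E)\subset\Der_0(\n^{I}_{k,l})$, and by the theorem of Section~\ref{sec:tan} the universal prolongation of this pair is exactly $\g^{I}_{k,l}$, which is finite-dimensional by Proposition~\ref{propI}. This yields an $\omega\colon TP\to \g^{I}_{k,l}$ as claimed.

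The main obstacle is justifying the use of Tanaka's prolongation in the non-fundamental setting, since $\n^{I}_{k,l}$ is not generated by $\n^{I}_{-1}$ when $k<l$. I would argue, as already observed after Theorem~\ref{thm:yama}, that Tanaka's original construction of the canonical frame bundle carries over verbatim once one replaces condition $(2')$ by condition $(2)$ in the definition of the prolongation: the step-by-step reduction of structure groups at each positive degree only uses the bracket with the whole negative part $\g_{-}(\m)$, not with $\m_{-1}$ alone. The remaining verifications, namely the naturality of the construction with respect to diffeomorphisms of $\E$ preserving $(E,F_1)$ and the existence of the reduction in degree $0$ corresponding to the stabilizer of $E$, are completely parallel to the fundamental case and to the treatment in~\cite{zel:tan}.
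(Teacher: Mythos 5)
Your proposal is correct and follows essentially the same route as the paper, which justifies Proposition~\ref{propkind1} exactly as ``a straightforward generalization of Tanaka prolongation procedure to filtered structures with constant non-fundamental symbol'': the hypothesis $\partial f/\partial z_i=0$ for $i=k+1,\dots,l-1$ guarantees that the flag $F_i$ built from $(E,F_1)$ has the same dimensions as in the trivial model, so the symbol is the constant pair $(\n^{I}_{k,l},\Stab(E))$ whose prolongation was already identified with $\g^{I}_{k,l}$. The paper also re-derives the statement in Section~\ref{sec:flag} by passing to the solution space and applying Theorem~\ref{flagthm} to the induced flag structure $C^{I}$, but that is an alternative packaging of the same prolongation machinery rather than a substantively different argument.
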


\section{Flag structures}
\label{sec:flag}
Non-linear mixed order equations can be viewed as a particular case of another class of geometric structures. Let $M$ be an arbitrary smooth manifold of dimension $n$. Let $\alpha=(\alpha_1,\dots,\alpha_r)$ be any increasing sequence of integers $1\le \alpha_1 < \dots < \alpha_r < n$. Denote by $F_{\alpha}(T_pM)$, $p\in M$, the flag variety of subspaces in $T_pM$ of dimensions $\alpha_1,\dots,\alpha_r$ and by $F_{\alpha}(TM)$ or simply by $F_\alpha(M)$ the bundle of flag varieties at all points $p\in M$.

Let $V$ be a vector space of dimension $n$ and let $F_\alpha(V)$ be the flag variety of subspaces in $V$. To stick with the notational agreements of the Tanaka theory we number the subspaces in flags from  $F_{\alpha}(V)$ by negative integers, i.e. an element of $F_{\alpha}(V)$ is a decreasing by inclusion tuple $\{V_{-i}\}_{i=1}^r$ of subspaces of $V$ such that $\dim V_{-i}=\alpha_i$.  The flag variety $F_\alpha(V)$ is naturally equipped with a transitive action of the Lie group $GL(V)$. The bundle $F_\alpha(M)$ can also be viewed as bundle associated to the principal $GL(V)$-bundle $\mathcal{F}(M)$ of all frames in $M$.

For any curve $W(t)\subset V$ of subspaces of fixed dimension $m$ (i.e., a curve in a Grassmann variety $\Gr_m(V)$) we define $W'(t)$ as follows. Choose a family of smooth curves $v_1(t),\dots,v_m(t)$ in $V$ such that
\[
W(t) = \langle v_1(t),\dots,v_m(t) \rangle,\quad t\in \R.
\]
Then we define:
\[
W'(t) = W(t) + \langle v'_1(t),\dots,v'_m(t) \rangle,\quad t\in \R.
\]
It is easy to check that $W'(t)$ does not depend on the choice if the vectors $v_i(t)$. Moreover of $w(t)$ is a smooth curve in $V$ such that $w(t)\in W(t)$ then for any $t_0$ the image of the vector $w'(t_0)$ to the factor space $V/W(t_0)$ depends on the vector $w(t_0)$ only and not on the curve $w(t)$, i.e an element of $\Hom\bigl(W(t_0), V/W(t_0)\bigr)$ is assigned to the tangent vector to the curve $t\mapsto W(t)$ at $t_0$.

Let now $\Gamma\subset F_\alpha(V)$ be an unparametrized curve. Fixing any local parameter $t$ on $\Gamma$, we can explicitly write it as:
\begin{equation}\label{flagcurve}
0 \subset V_{r}(t) \subset V_{r-1}(t)\subset\dots \subset V_{1}(t)\subset V_0 \subset V.
\end{equation}
We say that the curve $\Gamma$ is \emph{integral} or \emph{compatible with respect to differentiation}, if it $V_i'(t)\subset V_{i-1}(t)$ for all $i=1,\dots,r$. Similarly to the previous paragraph, the tangent line to the curve $\Gamma$ at the point corresponding to a parameter $t$ can be associated with a line generated by a degree $-1$ endomorphisms $X_t$ of the graded space $\displaystyle{\bigoplus_{i=0}^{r} V_i(t)/V_{i+1}(t)}$, where $V_{r+1}(t):=0$. Identifying all graded spaces $\displaystyle{\bigoplus_{i=0}^{r} V_i(t)/V_{i+1}(t)}$ with one grading of the space $V$  we can consider the line $\langle X_t\rangle$ of degree $-1$ endomorphisms of $V$, defined up to the conjugations by linear isomorphisms preserving the grading. This line (or more precisely its equivalence class with respect to the above conjugation) is called the \emph{flag symbol of the curve \eqref{flagcurve} at the point $t$}. The curve of flags is said to be of constant type $\langle X\rangle$ if its flag symbols at any point belong to the same equivalence class of the line $\langle X\rangle$ (with respect to  the conjugations by linear isomorphisms preserving the grading). Note that absolutely the same constructions can be done if the indices in \eqref{flagcurve} are shifted somehow.

\begin{dfn}
A \emph{flag structure on a manifold $M$} is a smooth one-dimensional subbundle $\mathcal C$ of the flag bundle $F_\alpha(TM)$ such that $\mathcal C_p\subset F_\alpha(T_pM)$ is an integral curve for all $p\in M$. We say that a flag structure has a constant flag symbol $\langle X\rangle$ if all its fibers are curves of flags with constant flag symbol $\langle X\rangle$.
\end{dfn}

Flag structures, and even in more general setting, were studied in our recent preprint \cite{quasi}.
Assume that, like in Proposition \ref{flagprolong}, $\ag(\langle X\rangle)$ is the largest graded subalgebra of $\gl(V)$ whose negative part is one-dimensional and is generated by $X$. The algebra $\ag(\langle X\rangle)$ is called the \emph{universal prolongation  of the flag symbol $\langle X\rangle$ in $\gl(V)$.}  Let $\g(\langle X\rangle)$ be the Sternberg prolongation of the algebra $\langle X\rangle$, as described in section~5. The following Theorem is a particular case of Theorem 2.4 in \cite{quasi}:

\begin{thm}
\label{flagthm}
For any flag structure on a manifold $M$  with constant flag symbol $\langle X\rangle$  there exists a natural frame bundle $P\to M$ and an absolute parallelism structure $\omega\colon TP \to \g(\langle X\rangle)$.
\end{thm}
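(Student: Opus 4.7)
The plan is to recognize a flag structure with constant symbol $\langle X\rangle$ as a filtered $G$-structure of constant type in the sense of \cite{quasi}, and then to invoke Theorem~2.4 of that paper, which produces the desired absolute parallelism taking values in $\g(\langle X\rangle)$. First I would work on the total space $\widetilde M\subset F_\alpha(TM)$ of the one-dimensional subbundle $\mathcal{C}$: a point $c\in\widetilde M$ lying over $p\in M$ is itself a flag $\{V_{-i}(c)\}$ in $T_pM$, and the tangent line to the fiber of $\mathcal{C}$ at $c$ gives, via the flag symbol construction, a line $\langle X_c\rangle$ of degree $-1$ endomorphisms of the associated graded of the flag, conjugate to $\langle X\rangle$ by the constant symbol assumption. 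The tautological flag on $\widetilde M$ together with lifts of the tangent direction to the curve $\mathcal{C}_p$ produces a canonical filtration of $T\widetilde M$ whose symbol at every point is the negatively graded nilpotent Lie algebra $\m$ generated inside $\gl(V)$ by the graded $V$ and the endomorphism $X$. The integrality condition $V_i'(t)\subset V_{i-1}(t)$ guarantees exactly that this filtration is compatible with the Lie bracket, so that $\widetilde M$ becomes a filtered manifold of constant symbol type $\m$.

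Next I would construct the principal bundle $P_0\to\widetilde M$ of graded linear isomorphisms from $V$ to the associated graded tangent space of this filtration, further required to send $X$ to a generator of the tangent line of $\mathcal{C}$. By Proposition~\ref{flagprolong} the structure group of $P_0$ is precisely the degree zero part $\ag_0(\langle X\rangle)$ of $\ag(\langle X\rangle)$, and the algebraic prolongation of the pair $(\m,\ag_0(\langle X\rangle))$ in the generalized Tanaka sense of Section~\ref{sec:tan} coincides with the Sternberg prolongation $\g(\langle X\rangle)$ of $\ag(\langle X\rangle)\subset\gl(V)$. Invoking Theorem~2.4 of \cite{quasi} on this structure yields a tower of higher-order prolongations $P_0\leftarrow P_1\leftarrow\cdots$ which stabilizes after finitely many steps and delivers the natural frame bundle $P$ together with an absolute parallelism $\omega\colon TP\to\g(\langle X\rangle)$ on it.

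The main obstacle is that the negative part of the symbol is in general \emph{not} fundamental: the single degree $-1$ direction generating the flag derivative need not produce the full filtration by iterated brackets inside $\m_{-1}$, exactly the phenomenon already encountered for $\n^{I}_{k,l}$ with $k<l$ in Section~\ref{sec:tan}. For this reason classical Tanaka prolongation cannot be applied verbatim, and the essential technical content --- already present in \cite{quasi} --- is to verify that the generalized prolongation still produces a well-defined absolute parallelism and that it matches the purely algebraic $\g(\langle X\rangle)$ computed via Proposition~\ref{flagprolong}. Naturality with respect to isomorphisms of flag structures is then immediate from the naturality of each step of the prolongation procedure.
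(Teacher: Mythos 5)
Your proposal matches the paper's treatment: the paper gives no independent proof of this theorem, stating only that it is a particular case of Theorem~2.4 in \cite{quasi}, which is exactly the reduction you invoke. Your additional detail --- building the filtered manifold on the total space of $\mathcal C$, identifying the (possibly non-fundamental) symbol $\m=\langle X\rangle\oplus V$, and matching the generalized Tanaka prolongation with $\g(\langle X\rangle)$ --- is a reasonable reconstruction of why the hypotheses of that cited theorem are satisfied, and is consistent with how the paper uses the result in Section~\ref{sec:flag}.
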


Now we show how to assign a natural flag structure with constant symbol to the EDS, both of first and second kind,  associated with a systems of ODEs \eqref{nlin:kl} on the space of solutions of this system.
Let  $\Fol(\mathcal E)$ be the foliation of the equation submanifold $\mathcal E$ the (prolonged) solutions of the system  \eqref{nlin:kl} or, equivalently, by the integral curves of the rank $1$ distribution $E$. Then Propositions \ref{propkind2} and \ref{propkind1} can be seen as particular cases of Theorem \ref{flagthm}.

Indeed, let us pass to the quotient manifold of $\mathcal E$  by the foliation $\Fol(\mathcal E)$, i.e. to the space of solutions of \eqref{nlin:kl}. Locally we can assume that there exists a
quotient manifold 
\[
\text{Sol}(\mathcal E)=\mathcal  E/\Fol(E),
\]
whose points are leaves of $\Fol(E)$ or, equivalently, (prolonged) solutions of  \eqref{nlin:kl}.
Let $\Phi\colon \mathcal E\to \Fol(E)$ be the canonical projection to the quotient manifold. 

Fix a leaf $\gamma$ of  $\Fol(E)$. For the EDS of the second kind, as in section 6,  let $F$ be the 2-dimensional completely integrable distribution tangent to the fibers of the projection $\pi_2\colon \E\to J^{k-2,l-2}(\R,\R^2)$:
\begin{equation}
\label{Ji}
V_{-1}(x):=\Phi_*\bigl(F(x)\bigr), \quad x\in \gamma.
\end{equation}
Then the curve $x\mapsto V_{-1}(x), x\in\gamma$ is a curve in the Grassmannian of planes in $T_\gamma \Fol(E)$, Taking differentiation, one can generate  from this curve the following curve  of flags in $T_\gamma \Fol(E)$:
\begin{equation}
C^{II}_\gamma=\{x\mapsto \{0 \subset V_{l-1}(x) \subset V_{l-2}\dots \subset V_{0}(x)= T_\gamma \Fol(E)\}: x\in\gamma\},
\end{equation}
where the spaces $V_{i}(x)$ are defined inductively: $V_{i-1}(x):=V_{i}'(x)$. Note that each curve $C^{II}_\gamma$ is a curve with constant flag symbol, generated by $X$ as in \eqref{Xdef} with the grading of the second kind  (see the paragraph after the formula \eqref{Xdef}).

Then the bundle $C^{II}\rightarrow \Fol(E)$ is the flag structure on  $\Fol(E)$ with the constant flag symbol generated by this $X$ and the problem of equivalence of the EDSs of second kind is the same as the problem of equivalence of the corresponding  flag structures $C^{II}$. Proposition \ref{propkind2} is the consequence of Proposition \ref{flagprolong} and Theorem \ref{flagthm}.

In the same way for the EDS of the first kind, if $F_i$ with $1\leq i\leq l-1$  are the distributions defined by relations \eqref{F1def}-\eqref{Fidef} and 
$F_0=T \Fol(E)$, then let 
\begin{equation}
C^{I}_\gamma=\{x\mapsto\{\Phi_*\bigl(F_i(x)\bigr)\}_{i=0}^{l-1}: x\in\gamma\},
\end{equation}
Note that by the definition the curve of flags $C^{I}_\gamma$ is compatible with respect to the differentiation. Moreover, in case of EDS of the first kind associated with ODE \eqref{nlin:kl} satisfying $\frac{\partial f}{\partial z_i} = 0$, $i=k+1,\dots,l-1$ the curve $C^{I}_\gamma$ is exactly a curve with constant flag symbol, generated by $X$ as in \eqref{Xdef} with the grading of the first kind (see the paragraph after the formula~\eqref{Xdef}). Then for such EDS the bundle $C^{I}\rightarrow \Fol(E)$ is the flag structure on  $\Fol(E)$ with the constant flag symbol generated by this $X$ and the problem of equivalence of the EDSs of first kind is the same as the problem of equivalence of the corresponding  flag structures $C^{I}$. Proposition \ref{propkind1} is the consequence of Proposition \ref{flagprolong} and Theorem \ref{flagthm}.

\section{Other kinds of EDS associated with an ODE system of mixed order}
\label{sec:shifts}

As shown above, the two different EDS interpretations of a given trivial or general system of two ODEs of mixed order come from fixing different jet space projections.

Namely, in case of an EDS of first kind we assume that the projection $J^{k,l}(\R,\R^2)\to J^{0,0}(\R,\R^2)$ is preserved. Hence, all symmetries of the first kind are just point transformations that preserve equation $\E\subset J^{k,l}(\R,\R^2)$. Moreover, due to the classical Lie theorem any transformation preserving the contact system on $J^{r,r}(\R,\R^2)$, $r\ge 0$, also preserves the projection $J^{r,r}(\R,\R^2)\to J^{0,0}(\R,\R^2)$. Therefore, defining an EDS of first kind we could as well assume that any of the projections $J^{k,l}(\R,\R^2)\to J^{r,r}(\R,\R^2)$, $r=0,\dots,k$, is preserved. 

In the same manner, an EDS of second kind is defined by assuming that the projection $J^{k,l}(\R,\R^2)\to J^{k-1,l-1}(\R,\R^2)$ is preserved. This automatically implies that any of the projections $J^{k,l}(\R,\R^2)\to J^{k-r,l-r}(\R,\R^2)$, $r=0,\dots,k$ is also preserved. 

But we can define other kinds of EDS assuming that we preserve any of the projections $J^{k,l}(\R,\R^2)\to J^{k-r,l-s}(\R,\R^2)$ for arbitrary $r=1,\dots,k$, $s=1,\dots,l$. Similar to Lie theorem, we can prove that this automatically implies that the projections $J^{k,l}(\R,\R^2)\to J^{k-r-1,l-s-1}(\R,\R^2)$ and $J^{k,l}(\R,\R^2)\to J^{k-r+1,l-s+1}(\R,\R^2)$ are also preserved, assuming that all indexes are non-negative. Thus, only the difference $\delta=(l-s)-(k-r)$ is important. 

In more detail, we have:
\begin{dfn} An \emph{EDS of shift $\delta$} associated with a trivial system of equations $y^{(k)}=z^{(l)}=0$, $k\le l$ is defined as a pair of vector distributions on the equation $\E={y_k=z_l=0}\subset J^{k,l}$:
\begin{align*}
E &= \langle \dd{x}+y_1\dd{y_0} + \dots y_{k-1} \dd{y_{k-2}} + z_1\dd{z_0} + \dots z_{l-1} \dd{z_{l-2}} \rangle,\\
F &= \langle \dd{y_1}, \dots, \dd{y_{k-1}}, \dd{z_{\delta+1}}, \dots, \dd{z_{l-1}}\rangle,
\end{align*}
if $\delta \ge 0$, and:
\begin{align*}
E &= \langle \dd{x}+y_1\dd{y_0} + \dots y_{k-1} \dd{y_{k-2}} + z_1\dd{z_0} + \dots z_{l-1} \dd{z_{l-2}} \rangle,\\
F &= \langle \dd{y_{-\delta+1}}, \dots, \dd{y_{k-1}}, \dd{z_1}, \dots, \dd{z_{l-1}}\rangle,
\end{align*}
if $\delta < 0$.
\end{dfn}
Note that  this definition includes also the case of $k=l$, where, as we shall see below, different values of the shift lead to different symmetry algebras. Although the definition makes sense for arbitrary values of $\delta$, we shall be mainly interested in non-trivial cases, when $\delta$ is in the range from $-k+2$ to $l-2$.

As above, the vector distribution $E$ is tangent to the solutions of the system, while the distribution $F$ is tangent to the fibers of the projection $\pi\colon \E\to J^{0,\delta}$. In particular, we see that the EDS of first kind corresponds to the shift $\delta=0$, while the EDS of second kind corresponds to the shift $\delta=l-k$.

There is a graphical way to encode an arbitrary EDS of shift $\delta$ using a skew Young tableau (i.e., the tableau does not need to be aligned to the left). Namely, the tableau consists of two rows with $l$ and $k$ boxes respectively, the rows are right aligned for $\delta=0$, the row with $l$ cells is shifted by extra $\delta$ boxes to the right if $\delta$ is positive or to $-\delta$ cells to the left otherwise. The restriction $-k+2 \le \delta \le l-2$ means that the rows overlap at least in two cells. 

Such graphical notation provides an easy way to describe the grading on the vector space $V$ and degree $-1$ operator $X\in \gl(V)$ as defined in Section~\ref{sec:sym}. Namely, each box in the tableau corresponds to a basis element in $V$. The grading is defined in such way that it decreases by $1$ from left to right starting from $-1$ with basis elements in the same column having the same degree. And the operator $X$ maps each basis element to another basis element corresponding to the box on the right, or to $0$, if there is no right neighbor. The corresponding graded nilpotent Lie algebra $\m$ is defined as $\R X\oplus V$. Below we list a number of examples of such tableaux. 

Tableaux and preserved jet space projections for systems of mixed order $(2,3)$:
\begin{align*}
\delta = 1: &\quad\young(\ \ \ ,\ \ )\quad J^{2,3}(\R,\R^2)\to   J^{1,2}(\R,\R^2) \to J^{0,1}(\R,\R^2);\\[2mm]
\delta = 0: &\quad\young(\ \ \ ,:\ \ )\quad J^{2,3}(\R,\R^2)\to   J^{1,1}(\R,\R^2) \to J^{0,0}(\R,\R^2).
\end{align*}

Tableaux and preserved jet space projections for systems of mixed order $(2,4)$:
\begin{align*}
\delta = 2: &\quad \young(\ \ \ \ ,\ \ )\quad J^{2,4}(\R,\R^2)\to   J^{1,3}(\R,\R^2) \to J^{0,2}(\R,\R^2);\\[2mm]
\delta = 1:  &\quad \young(\ \ \ \ ,:\ \ )\quad J^{2,4}(\R,\R^2)\to   J^{1,2}(\R,\R^2) \to J^{0,1}(\R,\R^2);\\[2mm]
\delta = 0:  &\quad \young(\ \ \ \ ,::\ \ )\quad J^{2,4}(\R,\R^2)\to   J^{1,1}(\R,\R^2) \to J^{0,0}(\R,\R^2).
\end{align*}

Tableaux and preserved jet space projections for systems of mixed order $(3,4)$:
\begin{align*}
\delta = 2: &\quad \young(:\ \ \ \ ,\ \ \ )\quad J^{3,4}(\R,\R^2)\to   J^{1,3}(\R,\R^2) \to J^{0,2}(\R,\R^2);\\[2mm]
\delta = 1: &\quad \young(\ \ \ \ ,\ \ \ )\quad J^{3,4}(\R,\R^2)\to   J^{2,3}(\R,\R^2) \to J^{1,2}(\R,\R^2)\to J^{0,1}(\R,\R^2);\\[2mm]
\delta = 0: &\quad \young(\ \ \ \ ,:\ \ \ )\quad J^{3,4}(\R,\R^2)\to   J^{2,2}(\R,\R^2) \to J^{1,1}(\R,\R^2)\to J^{0,0}(\R,\R^2);\\[2mm]
\delta = -1: &\quad \young(\ \ \ \ ,::\ \ \ )\quad J^{3,4}(\R,\R^2)\to   J^{2,1}(\R,\R^2) \to J^{1,0}(\R,\R^2).
\end{align*}

In fact, such tableaux can be used to encode different EDS associated with mixed order systems of ODEs with an arbitrary number of equations. As this topic lies outside of the scope of the current paper, we just show one particular example corresponding to an EDS associated with a system of order $(2,3,4)$:
\[
\young(\ \ \ \ ,::\ \ \ ,::\ \ )\quad J^{2,3,4}(\R,\R^3)\mapsto J^{1,2,1}(\R,\R^3)\mapsto J^{0,1,0}(\R,\R^3).
\]

Using the results from Section~\ref{sec:sym}, we can easily compute the symmetry algebra of an EDS of shift $\delta$ associated with a system~\eqref{odekl}. All other results including the relationship between Sternberg and Tanaka prolongations and the existence of natural frame bundles in case of non-linear systems are also easily generalized to the EDSs of arbitrary shift $\delta$. 

\begin{prop}
The symmetry algebra of an EDS of shift $\delta$ associated with a system~$y^{(k)}=z^{(l)}=0$ is described as follows.

1. If $\delta$ satisfies $0<\delta <l-k$ (i.e., when one of the rows in the corresponding skew Young tableau lies strictly within the other row), then the symmetry algebra is generated by prolongations of vector fields:
\begin{align*}
	&\dd{x}, x\dd{x}, x^2\dd{x} + (k-1)xy_0\dd{y_0} + (l-1)xz_0\dd{z_0},\\
         & y_0\dd{y_0}, z_0\dd{z_0},\\
	& x^i \dd{y_0},x^j\dd{z_0}\quad i=0,\dots,k-1;\ j=0\dots,l-1.
\end{align*}
The cases $\delta = 0$ and $\delta = l-k$ are covered by Propositions~\ref{propI} and~\ref{propII} respectively.

2. If $3=k\le l$ and $\delta=-1$, the symmetry algebra is generated by prolongations of the following vector fields:
\begin{small}
\begin{align*}
& x(2y_0-xy_1)\dd{x}+(2y_0^2-\frac{1}{2}x^2y_1^2)\dd{y_0}+y_1(2y_0-xy_1)\dd{y_1} + (l-1)z_0(2y_0-xy_1)\dd{z_0},\\
& 2(y_0-xy_1)\dd{x}-xy_1^2\dd{y_0}-y_1^2\dd{y_1}-(l-1)z_0y_1\dd{z_0},\\
& x^2\dd{x}+2xy_0\dd{y_0}+2y_0\dd{y_1}+(l-1)xz_0\dd{z_0},\\
&x\dd{x}-y_1\dd{y_1},\ y_0\dd{y_0} +y_1\dd{y_1},\ z_0\dd{z_0},\\
& y_1\dd{x} + \frac{1}{2}y_1^2\dd{y_0},\ \dd{x},\ \dd{y_0},x\dd{y_0}+\dd{y_1},x^2\dd{y_0}+2x\dd{y_1},\\
&x^i(xy_1-2y_0)^{j_0} y_1^{j_1}\dd{z_0},\quad i+j_0+j_1\le l-1.
\end{align*}
\end{small}
This symmetry algebra is isomorphic to $\mathfrak{csp}(4,\R)\rtt S^{l-1}(\R^4)$. 

4. If $3\le k\le l$ and $\delta > l-k$, then the symmetry algebra is generated by the prolongations of the following vector fields:
\begin{align*}
	&\dd{x}, x\dd{x}, x^2\dd{x} + (k-1)xy_0\dd{y_0} + (l-1)xz_0\dd{z_0},\\
         & y_0\dd{y_0}, z_0\dd{z_0}, x^i \dd{z_0},\quad i=0,\dots,l-1;\\
	& x^i g_{\delta,l-\delta}^{(s_1)} \dots g_{\delta,l-\delta}^{(s_j)}\dd{y_0}, \quad j\ge 0, i+ (l-\delta-1) j\le k-1,
\end{align*}
where the functions $g_{\delta,l-\delta}^{(s)}$ are defined in Section~\ref{sec:sym}.

5. The symmetry algebra in the case $4 \le k\le l$ and $\delta<0$ is obtained from the previous item by exchanging $y$ and $z$, $k$ and $l$ and replacing $\delta$ by $-\delta$.
\end{prop}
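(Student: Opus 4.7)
The plan is to generalize the argument of Proposition~\ref{propII} via a case analysis determined by the value of $\delta$. The overall strategy: given a symmetry $S$ of the EDS of shift $\delta$, I would descend it via the preserved projection $\pi\colon \E\to J^{r,s}(\R,\R^2)$ with $(r,s)=(0,\delta)$ for $\delta\ge 0$ and $(r,s)=(-\delta,0)$ for $\delta<0$ to a vector field $\bar S$ on $J^{r,s}$ that preserves the contact system there; conversely, $S$ is recovered as the prolongation of $\bar S$ subject to tangency to $\E=\{y_k=z_l=0\}$. The computation then reduces to a description of the admissible $\bar S$ in each range of $\delta$.

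For cases (1) and (4), where $\delta\ge 0$ and $\delta\ne l-k$, I would write
\[
\bar S=A\dd{x}+B\dd{y_0}+\sum_{i=0}^{\delta}C_i\dd{z_i},
\]
and observe that preservation of $dz_i-z_{i+1}\,dx$ forces $(A,C_0,\dots,C_\delta)$ to be a contact vector field on $J^{\delta}(\R,\R)$. Lie's classical computation of contact symmetries of $z^{(l)}=0$ then supplies the $\sll(2)$-, scaling and polynomial $x^j\dd{z_0}$ generators appearing in the $(x,z)$-part of the statement. The remaining freedom is in $B$, which after absorbing the $y_0\dd{y_0}$-scaling has the form $f(x,z_0,\dots,z_\delta)\dd{y_0}$, and the tangency of the prolongation to $\{y_k=0\}$ becomes the linear system
\begin{align*}
D^k f &= 0,\\
\dd{z_{\delta+1}}f = \dots = \dd{z_{l-1}}f &= 0,
\end{align*}
which is precisely \eqref{eq:dkl} with $r=l-2$, $p=k-1$, $q=l-\delta-2$. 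Lemma~\ref{lem:g}(d) identifies its solution space with the span of $x^i g_{\delta,l-\delta}^{(s_1)}\cdots g_{\delta,l-\delta}^{(s_j)}$ subject to $i+(l-\delta-1)j\le k-1$. In case (1) the hypothesis $\delta<l-k$ gives $l-\delta-1\ge k$, so no $j\ge 1$ is admissible and only the polynomials $x^i$ with $0\le i\le k-1$ survive; in case (4) the opposite inequality admits the full list, producing the $g_{\delta,l-\delta}^{(s_1)}\cdots g_{\delta,l-\delta}^{(s_j)}\dd{y_0}$ contributions as claimed.

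Case (2), with $k=3$ and $\delta=-1$, is where the main work lies. Here the projection is to $J^{1,0}(\R,\R^2)$ with coordinates $(x,y_0,y_1,z_0)$ and single contact form $\theta=dy_0-y_1\,dx$. A direct computation of $L_{\bar S}\theta\equiv 0\pmod{\theta}$ would show that the $(x,y_0,y_1)$-part of $\bar S$ is determined by a contact Hamiltonian $H(x,y_0,y_1)$, which the $dz_0$-component of the Lie derivative forces to be independent of $z_0$, while the $\dd{z_0}$-coefficient $C$ is initially free. Tangency of the prolongation at $\{y_3=0\}$ restricts $H$ to the classical space of contact Hamiltonians of $y^{(3)}=0$, which is isomorphic to $\mathfrak{sp}(4,\R)\cong S^2(V)$ with $V=\R^4$ spanned by the four basic first integrals $1,\,x,\,y_1,\,xy_1-2y_0$. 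Adjoining the conformal scaling in $z_0$ upgrades $\mathfrak{sp}(4,\R)$ to $\csp(4,\R)$. The vertical part $h(x,y_0,y_1)\dd{z_0}$ must then satisfy, by an argument analogous to cases~(1)/(4) but applied to the total-derivative operator on the $y$-jets modulo $y_3=0$, a linear system whose polynomial solution space is $S^{l-1}(V)$; the explicit monomial basis $\{x^i(xy_1-2y_0)^{j_0}y_1^{j_1}\mid i+j_0+j_1\le l-1\}$ recovers the list in the statement.

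Case (5) would follow from case (4) by the evident symmetry of the construction under interchanging the two rows of the skew Young tableau, which swaps $(y,k)\leftrightarrow(z,l)$ and negates $\delta$. The hardest step is case (2): the coupling of the contact Hamiltonian $H$ with the $\dd{z_0}$-direction produces cross-terms whose cancellation, together with the identification of the resulting Lie algebra structure as $\csp(4,\R)\rtt S^{l-1}(\R^4)$, requires careful bookkeeping; elsewhere the argument is a systematic application of Lemma~\ref{lem:g}.
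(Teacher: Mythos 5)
Your proposal follows essentially the same route as the paper, whose entire proof is the remark that the argument of Proposition~\ref{propII} carries over and reduces to solving system~\eqref{eq:dkl} with $p=k-1$ and $q=l-\delta-2$; your parameter identifications ($r=l-2$, $r-q=\delta$, $q+2=l-\delta$, condition $i+(l-\delta-1)j\le k-1$) are exactly what Lemma~\ref{lem:g}(d) requires, and your dichotomy between cases (1) and (4) via the sign of $l-\delta-1-(k-1)$ is the intended one. Your more detailed treatment of case (2) via the contact Hamiltonians of $y^{(3)}=0$ and the role swap in case (5) fills in what the paper leaves implicit, but it is the same underlying computation.
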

\begin{proof}
The proof is similar to the proof of Proposition~\ref{propII} and is reduced to solving equations~\eqref{eq:dkl} for $p=k-1$ and $q=l-\delta-2$. 
\end{proof}

This result can be considered as a direct generalization of Propositions~\ref{propI} and~\ref{propII}. The results of Sections~\ref{sec:tan} and~\ref{sec:g} also generalize to the EDS systems of arbitrary shift. In particular, all symmetry algebras from the above proposition can be obtained as Tanaka prolongations of the corresponding (possibly, non-fundamental) graded nilpotent Lie algebras as well as Sternberg prolongations of certain subalgebras in $\gl(k+l,\R)$.

The existence of natural frame bundles in case of non-linear systems and the relation to geometry flag structures are also easily generalized. See~\cite{quasi} for more detail. In particular, the equivalence problem for EDS of arbitrary shift $\delta$ coincides with $(k-l-\delta)$-equivalence of non-linear systems of ODEs of mixed order $(k,l)$ introduced in~\cite[Section 3]{quasi}.

\end{document}